\definecolor{webgreen}{rgb}{0,0,1}
\definecolor{recrown}{rgb}{1,.2,.6}
\begin{document}
\newtheorem{theorem}{Theorem}
\newtheorem{corollary}[theorem]{Corollary}
\newtheorem{lemma}[theorem]{Lemma}
\theoremstyle{definition}
\newtheorem{example}{Example}
\newtheorem*{examples}{Examples}
\newtheorem*{notation}{Notation}
\theoremstyle{theorem}
\newtheorem{thmx}{Theorem}
\renewcommand{\thethmx}{\text{\Alph{thmx}}}

\theoremstyle{theorem}
\newtheorem{corolmx}{Corollary}
\renewcommand{\thecorolmx}{\text{\Alph{corolmx}}}

\newtheorem{lemmax}{Lemma}
\renewcommand{\thelemmax}{\text{\Alph{lemmax}}}
\hoffset=-0cm
\theoremstyle{definition}
\newtheorem*{definition}{Definition}
\newtheorem*{remark}{\bf Remark}

\title{\bf Some factorization results for formal power series}
\author{Rishu Garg$^{1}$ {\large \orcidlink{0009-0008-2348-8340}}}
\author{Jitender Singh$^{2,\dagger}$ {\large \orcidlink{0000-0003-3706-8239}}}
\address[1,2]{Department of Mathematics,
Guru Nanak Dev University, Amritsar-143005, India\newline 
{\tt rishugarg128@gmail.com}, {\tt jitender.math@gndu.ac.in}}
\markright{}
\date{}
\footnotetext[2]{Corresponding author email(s): {\tt jitender.math@gndu.ac.in}

\url{https://sites.google.com/view/sonumaths3/home}\\

2020MSC: {Primary 13F25; 11Y05; 13P05}\\

\emph{Keywords}: Formal power series; Irreducibility; Integer coefficients; Factorization; Dumas irreducibility criterion; Newton polygon.
}
\maketitle
\newcommand{\K}{\mathbb{K}}
\begin{abstract}
In this paper, we obtain some factorization results on formal power series over principle ideal domains with sharp bounds on number of irreducible factors. These factorization results correspondingly lead to irreducibility criteria for formal power series. The information about prime factorization of the constant term up to a unit and that of some higher order terms is utilized for the purpose. Further, using theory of Newton polygons for power series, we extend the classical Dumas irreducibility criterion to formal power series over discrete valuation domains, which  in particular, yields several irreducibility criteria.
\end{abstract}
\section{Introduction}
Power series arise in several areas of mathematics such as in analysis, combinatorics, and algebra. In analysis, convergence of power series is of main interest. On the other hand, in combinatorics and algebra the algebraic properties of power series are of main interest. For a refreshing survey on algebraic theory of formal power series and applications, the reader may refer to  the comprehensive reviews in \cite{Niven,Sambale}.

Throughout, $R$ will denote a commutative ring with 1, and $R[[z]]$ the ring of formal power series over $R$ in the indeterminate $z$. A power series $f\in R[[z]]$  is invertible if and only if so does $f(0)$ in $R$. So, the question of factorization of $f$ in the ring $R[[z]]$ makes sense if $f(0)$ is not a unit in $R$. In view of this, the rings of our interest are those rings $R$ for which  the power series ring $R[[z]]$ is a unique factorization domain (UFD). If $R$ is a principal ideal domain (PID), then $R[[z]]$ is a UFD. However, many pathologies are known to exist in the power series ring $R[[z]]$ in comparison to the corresponding polynomial ring $R[z]$. In view of this, if $R$  is a UFD, then $R[[z]]$ need not be a UFD (See \cite{Samuel1961,Brewer}). Nevertheless, one can relate the ideal structure of the base ring with that of its power series ring. This can sometime be very useful in understanding factorization of power series over PIDs. We mention here that the power series rings have been the object of research in many contexts such as the dimension theory \cite{Arnold1,Arnold2,Park1,Park2}, the valuation overrings, the quotient field, the integral dependence, the cancelation problem, the valuation domains, the root extension, the study of subrings, the factorization properties etc. (see for the detail \cite{Gilmer,Brewer,Park3,Toan,Park4,Chang21,Gilmer2,A24,Hizem24} and references cited therein).

One of the most important ring of formal power series is $\mathbb{Z}[[z]]$, which is a UFD. The irreducibility properties in the polynomial ring $\Bbb{Z}[z]$ and the ring of formal power series $\Bbb{Z}[[z]]$  are generally not correlated. For example, for distinct prime numbers  $p$ and $q$, the polynomial $pq+z$ is irreducible in $\Bbb{Z}[z]$ but can be factored into two irreducible elements in $\Bbb{Z}[[z]]$, although here, the explicit factorization is not straightforward. Likewise, the polynomial $p+(1+pq)z+qz^2$ is irreducible in $\Bbb{Z}[[z]]$ but factors as $(p+z)(1+qz)$ in $\Bbb{Z}[z]$. In fact some factorization properties of the ring $\Bbb{Z}[[z]]$ have been obtained in the paper \cite{Daniel2008}, wherein numerous irreducibility criteria are also witnessed. On these lines, some results on irreducibility and factorization of power series in the ring $\Bbb{Z}[[z]]$ have been obtained recently in \cite{Dutta2021,Dutta2024}.

More generally, factorization properties and irreducibility of polynomials and formal power series over a PID $R$  in the ring $R[[z]]$ have been obtained in the paper  \cite{Elliot}. In view of this, we have the following well known factorization result  \cite{Daniel2008,Elliot}.
\begin{thmx}\label{th:A}
Let $R$ be a principle ideal domain. Let $f=\sum_{i=0}^\infty a_iz^i\in R[[z]]$ be such that $a_0$ is a nonzero nonunit. If $a_0$ is a product of two nonassociate elements  $m$ and $n$ in $R$, then $f$ is not irreducible in $R[[z]]$.
\end{thmx}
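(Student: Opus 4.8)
The plan is to produce an explicit nontrivial factorization $f=gh$ by the method of undetermined coefficients. Writing $g=\sum_{i\ge 0}b_iz^i$ and $h=\sum_{i\ge 0}c_iz^i$, I would prescribe $b_0=m$ and $c_0=n$, so that $b_0c_0=a_0$ and both factors are nonunits in $R[[z]]$ (since $m,n$ are nonunits in $R$); any such $g,h$ with $gh=f$ then witnesses that $f$ is not irreducible. Because we work in a formal power series ring, there is no analytic convergence to control: it suffices to solve the coefficient equations one index at a time, so the whole argument is a Hensel-type successive approximation.

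Comparing coefficients of $z^k$ in $gh=f$ gives $a_0=b_0c_0$ for $k=0$ and, for $k\ge 1$,
\[
b_0c_k+b_kc_0=a_k-\sum_{i=1}^{k-1}b_ic_{k-i}=:d_k,
\]
where $d_k$ depends only on the previously determined coefficients $b_1,\dots,b_{k-1}$ and $c_1,\dots,c_{k-1}$. Thus the inductive step reduces to solving the single linear equation $mc_k+nb_k=d_k$ for the unknowns $b_k,c_k\in R$. Since $R$ is a principal ideal domain, this is solvable exactly when $\gcd(m,n)\mid d_k$; in the favourable case where $m$ and $n$ are comaximal I would fix one Bezout identity $sm+tn=1$ at the outset and simply set $c_k=sd_k$ and $b_k=td_k$, which solves every step automatically since $(ms+nt)d_k=d_k$. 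Carrying the induction through yields $g,h\in R[[z]]$ with $gh=f$ and $b_0=m$, $c_0=n$ nonunits, giving the desired reducibility.

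The main obstacle is precisely the divisibility requirement $\gcd(m,n)\mid d_k$ at each step, which is automatic only when $m$ and $n$ are comaximal. For factors that are merely nonassociate this can genuinely fail: when $a_0$ is, up to a unit, a prime power $p^e$, every factorization $a_0=mn$ into nonunits has $p\mid\gcd(m,n)$, and already $d_1=a_1$ obstructs the first step as soon as $p\nmid a_1$. I would therefore leverage the hypothesis not through the given splitting but by passing to a comaximal pair of nonunit factors of $a_0$ before running the construction; such a refinement is available precisely when $a_0$ carries two non-associate prime divisors, in which case a comaximal nonunit splitting exists and the Bezout step applies verbatim. Isolating the exact condition on $a_0$ guaranteeing this reduction — and thereby pinning down the sense in which the nonassociate hypothesis must be strengthened to comaximality for the Bezout engine to run at every stage — is the delicate point of the argument.
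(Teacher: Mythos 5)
Your construction is the same engine as the paper's: the paper fixes a B\'ezout identity $mu+nv=1$ and builds a single auxiliary series $g=\sum_{i\geq 1}b_iz^i$ by the recursion $b_i=a_i-uv\sum_{t=1}^{i}b_tb_{i-t}$, so that $f=(m+vg)(n+ug)$; expanding this product, the equation at each index is exactly your linear step $mc_k+nb_k=d_k$, solved uniformly by the fixed B\'ezout pair, just as in your scheme. So in the comaximal case your proof and the paper's coincide up to packaging.

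The more important point is that the ``delicate point'' you isolate at the end is a genuine error in the paper, not a defect of your proposal. The paper's proof opens with the claim that in a PID two nonassociate elements $m$ and $n$ generate the unit ideal; this is false ($m=2$, $n=4$ in $\mathbb{Z}$ generate $(2)$). Indeed, the theorem as literally stated is contradicted elsewhere in the same paper: $f=8+z$ has $a_0=8=2\cdot 4$, a product of two nonassociate nonunits, yet $f$ is irreducible in $\mathbb{Z}[[z]]$ by Theorem \ref{th:B} (here $p=2$ does not divide $a_1=1$). Your diagnosis is exactly what goes wrong: when $a_0$ is an associate of a prime power, every nonunit splitting has $p\mid\gcd(m,n)$, and $d_1=a_1$ obstructs the first step. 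The correct hypothesis, as in the cited sources and as actually used later (in the proof of Theorem \ref{th:1}, Theorem \ref{th:A} is invoked only through the statement that an irreducible $f$ has $a_0$ associate to a prime power), is that $a_0$ is a product of two \emph{coprime} nonunits, equivalently $\omega(a_0)\geq 2$; your observation that a comaximal nonunit splitting exists precisely in that case completes the argument, and with that repair your induction is a correct proof. One small addendum: the hypothesis must also demand that $m$ and $n$ be nonunits, as your setup $b_0=m$, $c_0=n$ tacitly does, since otherwise $a_0=1\cdot a_0$ is a nonassociate splitting for every nonunit $a_0$ and the stated conclusion fails again (e.g.\ for $a_0$ prime, where $f$ is always irreducible by Theorem \ref{th:B}).
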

We remark that the conclusion of Theorem \ref{th:A} does not hold incase $R$ is not a PID and $R[[z]]$ is still a UFD (See \cite[Corollary 2.3(3)]{Elliot}).
\begin{proof}[\bf Proof of Theorem \ref{th:A}] Since $R$ is a PID and $m$ and $n$ are not associates, the ideal generated by $m$ and $n$ is $R$. Consequently, there exist $u$ and $v$ in $R$ for which  $mu+nv=1$. Now let $g=\sum_{i=0}^\infty b_i z^i \in R[[z]]$ be such that the coefficients of $g$ are defined recursively by
\begin{eqnarray*}
b_0=0;~b_i=a_i-uv\sum_{t=1}^{i}b_tb_{i-t},~i=1,2,\ldots.
\end{eqnarray*}
Then $f$ admits a factorization $f(z)=(m+vg(z))(n+ug(z))$ in $R[[z]]$.
\end{proof}
Theorem \ref{th:A} tells us that the irreducibility in the UFD $R[[z]]$ needs investigation only for those power series in which the constant term is an associate to a  prime power. In view of this, if $f=\sum_{i=0}^\infty a_i z^i \in \Bbb{Z}[[z]]$ is such that $|a_0|$ is prime,  then $f$ is irreducible in $\Bbb{Z}[[z]]$. 
This result follows from the following irreducibility criterion proved in \cite{Elliot}.
\begin{thmx}\label{th:B}
   Let $R$ be a principal ideal domain. Let $f=\sum_{i=0}^\infty a_i z^i \in R[[z]]$ be such that  $a_0$ is an associate to $p^k$ for some prime $p$ in $R$ and a positive integer $k$. If  $k=1$, or $p$ does not divide $a_1$, then $f$ is irreducible in $R[[z]]$.
\end{thmx}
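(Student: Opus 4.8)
The plan is to argue by contradiction, exploiting two facts: that $R[[z]]$ is a UFD (since $R$ is a PID), and that a power series in $R[[z]]$ is a unit precisely when its constant term is a unit in $R$. Suppose, then, that $f$ admits a nontrivial factorization $f=gh$ in $R[[z]]$, where $g=\sum_{i=0}^\infty b_iz^i$ and $h=\sum_{i=0}^\infty c_iz^i$ are both nonunits. Comparing constant terms gives $a_0=b_0c_0$, and since neither $g$ nor $h$ is a unit, neither $b_0$ nor $c_0$ is a unit in $R$. The entire argument will hinge on the arithmetic of the constant terms $b_0,c_0$ as nonunit divisors of $a_0$.

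First I would dispose of the case $k=1$. Here $a_0$ is associate to the prime $p$, so $b_0c_0$ is associate to $p$; since $p$ is prime, hence irreducible in the domain $R$, one of $b_0,c_0$ must be a unit, contradicting that both are nonunits. Thus no nontrivial factorization can exist when $k=1$.

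For the remaining case I would assume $k\geq 2$ together with $p\nmid a_1$. Because $a_0$ is associate to $p^k$ and $R$ is a UFD, the nonunit factors $b_0$ and $c_0$ of $p^k$ must each be associate to a strictly positive power of $p$; in particular $p\mid b_0$ and $p\mid c_0$. Now comparing the coefficients of $z$ in $f=gh$ yields $a_1=b_0c_1+b_1c_0$, and since $p$ divides both $b_0$ and $c_0$, it follows that $p\mid a_1$, contradicting the hypothesis $p\nmid a_1$. Either case produces a contradiction, so $f$ is irreducible.

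I do not expect a serious obstacle here; the one point requiring care is the passage from ``$b_0,c_0$ are nonunit divisors of $p^k$'' to ``$p$ divides each of $b_0,c_0$'', which rests squarely on unique factorization in $R$. This is exactly the step that would break down for a general UFD whose power series ring is poorly behaved, and it is why the hypothesis that $R$ is a PID (forcing $R[[z]]$ to be a UFD, with the unit characterization intact) is essential rather than cosmetic.
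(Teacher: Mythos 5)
Your proof is correct. Note, though, that the paper never proves Theorem~\ref{th:B} directly: it quotes the result from \cite{Elliot} and later observes that it is the case $k=1$ or ($j=1$, $\ell=0$) of Theorem~\ref{th:2G}, whose proof runs through discrete valuations (every irreducible factor contributes at least $1$ to $v$ of its constant term, so either $r\leq k=1$ or $v(a_1)\geq r-1>0$), plus a localization step to pass from the DVR back to the PID. Your argument is the elementary shadow of that: you extract from unique factorization in $R$ that both constant terms $b_0,c_0$ of a putative factorization are divisible by $p$, and then read off $p\mid a_1$ from $a_1=b_0c_1+b_1c_0$. What your route buys is self-containment and economy --- no valuations, no localization. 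Two small remarks on your closing paragraph: first, you announce that you will exploit the fact that $R[[z]]$ is a UFD, but your proof never uses it (irreducibility here is just ``not a product of two nonunits,'' and you only ever compare coefficients); second, the step ``nonunit divisors of $p^k$ are divisible by $p$'' needs only that $p$ is a prime element of the domain $R$ (iterate $p\mid b_0c_0$), so the argument is even more robust than you suggest --- the PID hypothesis is what makes the \emph{companion} results (Theorem~\ref{th:A}, and the global factorization theory) work, not this one.
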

As an application of Theorems \ref{th:A} and \ref{th:B}, we may have the following factorization result.
\begin{thmx}\label{th:1b}
Let $R$ be a principal ideal domain. Let $f = \sum_{i=0}^\infty a_i z^i\in R[[z]]$ be such that $a_0=u\prod_{i=1}^r p_i^{k_i}$ where $r\geq 1$, $u$ is a unit in $R$; $p_1,\ldots,p_r$ are pairwise nonassociate primes in $R$ and $k_1,\ldots,k_r$ are all positive integers. If $a_0$ and $a_1$ are nonassociates, then $f$ is a product of exactly $r$ irreducible factors in $R[[z]]$.
\end{thmx}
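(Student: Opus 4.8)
The plan is to induct on the number $r$ of distinct prime factors of $a_0$, peeling off one prime power at a time with Theorem \ref{th:A} and certifying the irreducibility of each peeled factor with Theorem \ref{th:B}. The first thing I would record is that, exactly as in the proof of Theorem \ref{th:A}, the hypothesis that $a_0$ and $a_1$ are nonassociates forces the ideal $(a_0,a_1)$ to be all of $R$; since every $p_i$ divides $a_0$, this is precisely the statement that $p_i\nmid a_1$ for each $i=1,\ldots,r$. This single consequence is what will repeatedly feed Theorem \ref{th:B}.

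For the base case $r=1$ we have $a_0\sim p_1^{k_1}$. If $k_1=1$ the series is irreducible by Theorem \ref{th:B} with no further input; if $k_1\geq 2$, then $p_1\nmid a_1$ by the observation above, and again Theorem \ref{th:B} gives irreducibility. Either way $f$ is a single irreducible, i.e.\ a product of exactly $r=1$ irreducible factors.

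For the inductive step ($r\geq 2$) I would split the constant term as $a_0=m\,n$ with $m=p_1^{k_1}$ and $n=u\prod_{i=2}^r p_i^{k_i}$. These are built from disjoint sets of primes, hence nonassociate, so Theorem \ref{th:A} yields a factorization $f=(m+vg)(n+ug)$, where $g=\sum_{i\ge 1}b_iz^i$ with $b_0=0$ and $mu+nv=1$. The one computation I actually need is that $b_1=a_1$, which is immediate from the recursion since $b_0=0$; consequently the factor $f_1=m+vg$ has constant term $m\sim p_1^{k_1}$ and linear coefficient $va_1$, while $f_2=n+ug$ has constant term $n\sim\prod_{i=2}^r p_i^{k_i}$ and linear coefficient $ua_1$. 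Reducing $mu+nv=1$ modulo $p_1$ shows $p_1\nmid v$, and reducing modulo each $p_i$ with $i\geq 2$ shows $p_i\nmid u$.

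Now I would finish as follows. For $f_1$, since $p_1\nmid v$ and $p_1\nmid a_1$ we get $p_1\nmid va_1$, so Theorem \ref{th:B} makes $f_1$ irreducible. For $f_2$ I must check that its constant and linear coefficients are again nonassociate, so that the induction hypothesis applies: any common prime factor of $n$ and $ua_1$ would be some $p_i$ with $i\geq 2$, but $p_i\nmid u$ and $p_i\nmid a_1$ exclude this, whence $(n,ua_1)=R$. The induction then presents $f_2$ as a product of exactly $r-1$ irreducibles, so $f=f_1f_2$ is a product of exactly $r$ irreducibles, the count being unambiguous because $R[[z]]$ is a UFD. The step I expect to be the real crux, and the one deserving the most care, is precisely this propagation: one must verify both that the nonassociativity hypothesis transfers to the residual factor $f_2$ and that the linear coefficient of each peeled factor escapes divisibility by the relevant prime, since it is exactly this non-divisibility that Theorem \ref{th:B} consumes at every stage.
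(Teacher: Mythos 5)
Your proof is correct and follows essentially the same route as the paper: split the constant term via Theorem \ref{th:A}, observe that the linear coefficient of each peeled factor is a unit multiple of $a_1$ modulo the relevant prime, and invoke Theorem \ref{th:B} on each factor; your reading of ``nonassociates'' as $(a_0,a_1)=R$ is exactly the reading the paper itself relies on (and is in fact necessary for the statement to be true, as $f=(p-z)^2(q+z)$ shows). The only organizational difference is that you peel off one prime at a time by induction and verify that the coprimality hypothesis propagates to the residual factor, whereas the paper produces all $r$ factors at once and checks $p_t\nmid b_{t1}$ for each $t$ directly from the expansion of $a_1$.
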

\begin{proof}[\bf Proof of Theorem \ref{th:1b}] The case $r=1$ reduces to one of the parts of Theorem \ref{th:B}. So, we may assume that $r>1$. Let $u=u_1\cdots u_r$ where $u_1,\ldots,u_r$ are units in $R$. By following the proof of Theorem \ref{th:A}, we find that $f$ factors as
\begin{eqnarray*}
f(z)=(u_1p_1^{k_1}+f_1(z))\cdots (u_rp_r^{k_r}+f_r(z)),
\end{eqnarray*}
where  $f_i=\sum_{j=0}^\infty b_{ij}z^j\in R[[z]]$ with $b_{i0}=0$  for $i=1,\ldots r$. Then we have
\begin{eqnarray*}
a_1=\sum_{i=1}^r\Bigl(b_{i1}uu_i^{-1}\prod_{j\neq i}{p_j^{k_j}}\Bigr)\equiv b_{t1}uu_t^{-1}\prod_{j\neq t}{p_j^{k_j}}\mod p_t,~t=1,\ldots, r,
\end{eqnarray*}
which in view of the fact that $a_0$ and $a_1$ are nonassociates tells us that $p_t$ does not divide $b_{t1}$ for every $t=1,\ldots, r$. By Theorem \ref{th:B}, $g_t=u_tp_t^{k_t}+f_t$ is irreducible in  $R[[z]]$ for each $t=1,\ldots, r$, and so, $f$ is the product of $r$ irreducible factors $g_1,\ldots,g_r$ in $R[[z]]$.
\end{proof}
In \cite{Daniel2007}, the authors characterized irreducibility of quadratic polynomials in $\Bbb{Z}[[z]]$, and reducibility of quadratic polynomials having integer coefficient in $\Bbb{Z}[[z]]$ was shown to be equivalent to their reducibility in the ring $\mathbb{Z}_p[z]$, the ring of polynomials over the $p$-adic integers. On these lines, the authors in \cite{Daniel2007} proved the following reducibility result for a class of higher degree polynomials.
\begin{thmx}\label{th:A2}
Let $f=\sum_{i=0}^d a_iz^i\in \Bbb{Z}[z]$ be such that there exists a prime $p$ and integers $n\geq 2$, $m\geq 1$, and $\gamma\geq 1$,  such that  $a_0=p^n$,  $a_1=p^m \gamma$, where $p\nmid \gamma$, $p\nmid a_i$ for all $i\geq 2$.
\begin{enumerate}
    \item[(i)] If $n>2m$, then $f$ is reducible in  $\Bbb{Z}[[z]]$ as well as $\mathbb{Z}_p[z]$.
    \item[(ii)] If $n\leq 2m$, then $f$ is reducible in $\Bbb{Z}[z]$ if and only if $f$ has a root  $\rho\in\mathbb{Z}_p$ with $v_p(\rho)\geq 1$.
\end{enumerate}
\end{thmx}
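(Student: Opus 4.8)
The plan is to extract the $p$-adic behaviour of $f$ from its Newton polygon $N_p(f)$ with respect to $p$ and convert it into factorization statements. Since $v_p(a_0)=n$, $v_p(a_1)=m$ and $v_p(a_i)=0$ for all $i\ge 2$, the lattice points governing $N_p(f)$ are $(0,n)$, $(1,m)$ and $(i,0)$ for $2\le i\le d$. Whether $(1,m)$ lies strictly below the segment joining $(0,n)$ to $(2,0)$ or on or above it is decided precisely by $n>2m$ versus $n\le 2m$, which is the dichotomy in the statement, and I would treat the two parts separately.

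For part (i), the inequality $n>2m\ge 2$ forces $n\ge 3$, so $a_0=p^n=p\cdot p^{\,n-1}$ is a product of the nonassociate elements $p$ and $p^{\,n-1}$; Theorem~\ref{th:A} then gives reducibility in $\mathbb{Z}[[z]]$. For $\mathbb{Z}_p[z]$ I would apply the Newton polygon factorization over $\mathbb{Q}_p$: here $(1,m)$ is a vertex, so the edge from $(0,n)$ to $(1,m)$ has horizontal length $1$ and slope $m-n$, and its associated factor is linear over $\mathbb{Q}_p$, yielding a root $\rho\in\mathbb{Q}_p$ with $v_p(\rho)=n-m\ge 1$. Hence $\rho\in\mathbb{Z}_p$, the monic factor $z-\rho$ divides $f$ in $\mathbb{Z}_p[z]$, and since $\deg f\ge 2$ this exhibits $f$ as reducible in $\mathbb{Z}_p[z]$.

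For part (ii), concerning reducibility in the power series ring $\mathbb{Z}[[z]]$, the polygon collapses to a single sloped edge from $(0,n)$ to $(2,0)$ of length $2$ and slope $-n/2$, so $f$ has exactly two roots of valuation $n/2>0$ and all remaining roots are $p$-adic units. The engine is Weierstrass preparation over the complete discrete valuation ring $\mathbb{Z}_p$: writing $f=U\,P$ in $\mathbb{Z}_p[[z]]$ with $U$ a unit and $P$ distinguished, the Weierstrass degree equals $\mathrm{ord}_z(\bar f)=2$ in $\mathbb{F}_p[[z]]$, so $P$ is a distinguished \emph{quadratic} carrying exactly the two positive-valuation roots. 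For the forward direction, a nontrivial factorization $f=gh$ in $\mathbb{Z}[[z]]$ satisfies $g(0)h(0)=p^n$ with neither constant term a unit, so $p\mid g(0)$ and $p\mid h(0)$; reducing modulo $p$ and using additivity of $\mathrm{ord}_z$ in the domain $\mathbb{F}_p[[z]]$ gives $\mathrm{ord}_z\bar g=\mathrm{ord}_z\bar h=1$, whence each of $g,h$ has a distinguished linear Weierstrass factor $z-\rho$ with $\rho\in p\mathbb{Z}_p$. Any such $\rho$ is a root of $f$ with $v_p(\rho)\ge 1$; the valuation count moreover forces $v_p(\rho)=n/2$, so $f$ is automatically irreducible when $n$ is odd. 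Conversely, a root $\rho\in\mathbb{Z}_p$ with $v_p(\rho)\ge 1$ must be a root of $P$ (the unit $U$ has no zeros in $p\mathbb{Z}_p$), so the distinguished quadratic splits as $P=(z-\rho)(z-\rho')$ with $\rho'\in p\mathbb{Z}_p$, and $f=U(z-\rho)(z-\rho')$ is a factorization into two non-units in $\mathbb{Z}_p[[z]]$.

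The hard part is the descent in this converse: transferring the factorization from $\mathbb{Z}_p[[z]]$ back to $\mathbb{Z}[[z]]$, since $\rho,\rho'$ and the coefficients of $P$ typically lie in $\mathbb{Z}_p\setminus\mathbb{Q}$. One cannot simply replace $P$ by the integer truncation $a_2z^2+a_1z+a_0$, whose Weierstrass polynomial is in general different, so the descent must be made for $f$ itself. I would perform it by the method underlying the quadratic characterization of \cite{Daniel2007}, building the two integer factors coefficient by coefficient and using the split data to show that at each stage the governing congruence modulo $p^{\,n/2}$ is solvable over $\mathbb{Z}$, the $\mathbb{Z}_p$-factorization guaranteeing that no obstruction arises. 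Establishing this integrality of the recursion is the technical heart of part (ii); the remaining points --- primitivity of $f$, convergence of the Weierstrass factors at arguments in $p\mathbb{Z}_p$, and the valuation bookkeeping --- are routine.
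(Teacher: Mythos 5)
First, a point of reference: the paper contains no proof of Theorem~\ref{th:A2} at all --- it is imported verbatim from \cite{Daniel2007} as background --- so there is no in-paper argument to compare yours against, and I can only judge the proposal on its own terms.

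On those terms there is a genuine gap in part~(i). You deduce reducibility in $\mathbb{Z}[[z]]$ by applying Theorem~\ref{th:A} to the factorization $a_0=p^n=p\cdot p^{\,n-1}$, on the grounds that $p$ and $p^{\,n-1}$ are nonassociate. But the engine of Theorem~\ref{th:A} is a B\'ezout identity $mu+nv=1$ between the two factors of $a_0$, which requires them to be \emph{comaximal}, not merely nonassociate; $p$ and $p^{\,n-1}$ generate the proper ideal $(p)$, so the construction does not run. (The literal wording of Theorem~\ref{th:A} in this paper is itself too strong: $p^3+z$ has constant term $p\cdot p^2$, a product of nonassociates, yet is irreducible in $\mathbb{Z}[[z]]$ by Theorem~\ref{th:B}. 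So ``$n\geq 3$'' alone cannot yield reducibility, and the hypothesis $n>2m$ must actually be used.) The correct route to $\mathbb{Z}[[z]]$-reducibility in (i) is the one you deploy only for $\mathbb{Z}_p[z]$: the vertex at $(1,m)$ breaks the Newton polygon into two edges, Weierstrass preparation over $\mathbb{Z}_p$ then factors $f$ in $\mathbb{Z}_p[[z]]$ into two nonunits (constant terms of valuations $n-m$ and $m$, both positive), and one must then descend from $\mathbb{Z}_p[[z]]$ to $\mathbb{Z}[[z]]$.

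That descent --- for $f$ with constant term $\pm p^n$, reducibility in $\mathbb{Z}_p[[z]]$ implies reducibility in $\mathbb{Z}[[z]]$ --- is therefore needed in both parts, and in part~(ii) you explicitly leave it as a sketch (``building the two integer factors coefficient by coefficient \ldots no obstruction arises''). This is precisely the technical core of the Birmajer--Gil argument, not a routine verification, and as written it amounts to citing the result you are meant to prove. The remainder of your part~(ii) is sound: the Weierstrass degree is $\mathrm{ord}_z\bar f=2$, a nontrivial factorization forces $\mathrm{ord}_z\bar g=\mathrm{ord}_z\bar h=1$ and hence a root in $p\mathbb{Z}_p$, the single edge of slope $-n/2$ forces every such root to have valuation $n/2$ (so $n$ must be even), and the converse direction over $\mathbb{Z}_p[[z]]$ is correct. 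One further point worth flagging rather than silently repairing: the printed statement of (ii) asserts reducibility in $\mathbb{Z}[z]$, whereas your argument (and the source \cite{Daniel2007}) concerns $\mathbb{Z}[[z]]$; the statement as printed appears to contain a typographical slip.
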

Recently using Newton polyhedron approach the authors in \cite{Rond} proved an irreducibility criterion for testing irreducibility of polynomials in one variable over the power series ring $\K[[x_1,\ldots,x_n]]$ in $n$ variables over a field $\K$.

It is worthwhile to mention that the classical irreducibility criteria for testing irreducibility of polynomials having integer coefficients due to Sch\"onemann, Eisenstein, Dumas, Perron and others  are available in the literature (see \cite{SKJS2023} for a survey on this topic). However, devising of irreducibility criteria for formal power series over integers has remained largely missing in the past literature. This lack of availability  of irreducibility criteria in abundance makes it difficult for testing irreducibility of  formal power series over integers and over more general domains. This serves as motivation for our present work. So, the aim of this paper is to contribute factorization results and devise new irreducibility criteria for some classes of formal power series over PIDs and some classes of formal power series over discrete valuation domains. In devising our results, we combine the information on factorization properties of the constant term of the underlying power series with the prime factorization of one or more higher order coefficients. Further, we extend the classical Dumas irreducibility criterion \cite{D} to formal power series using Newton polygon arguments, which in particular yields a couple of new  irreducibility criteria for formal power series.

The paper is organized as follows. The main results of this paper are stated and proved in Section \ref{sec:2}. In Section \ref{sec:4}, we provide some explicit examples of formal power series whose factorization properties may be deduced from our results.
\section{Main results and proofs}\label{sec:2}
We recall the prime counting functions $\omega(n)$ and $\Omega(n)$ for a nonzero nonunit $n$ in a UFD $R$, which respectively denote the number of pairwise nonassociate prime divisors of $n$ and the number of pairwise nonassociate prime divisors of $n$ counted with multiplicity. If $R$ is a PID and $f=\sum_{i=0}^\infty a_i z^i \in R[[z]]$, then we define $\Omega_f$ as the number of irreducible factors of $f$ in $R[[z]]$ counted with multiplicity. Using these definitions, we have the following factorization result which provides us an estimate about the number of irreducible factors of a given power series in the UFD $R[[z]]$.
\begin{theorem}\label{th:1}
Let $R$ be a principal ideal domain. If $f=\sum_{i=0}^\infty a_i z^i \in R[[z]]$ is such that $a_0$ is a nonzero nonunit in $R$, then
\begin{eqnarray*}
\omega(a_0)\leq \Omega_f\leq \Omega(a_0).
\end{eqnarray*}
In particular, if $a_0$ is an associate to a prime in $R$, then $f$ is irreducible.
\end{theorem}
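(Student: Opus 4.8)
The plan is to shuttle between factorizations of $f$ in $R[[z]]$ and factorizations of the constant term $a_0$ in $R$ by evaluating at $z=0$. I would begin by writing $a_0=u\prod_{i=1}^r p_i^{k_i}$, where $u$ is a unit and $p_1,\dots,p_r$ are pairwise nonassociate primes, so that $\omega(a_0)=r$ and $\Omega(a_0)=\sum_{i=1}^r k_i$. Since $R$ is a PID, $R[[z]]$ is a UFD, so $f$ admits an essentially unique factorization $f=g_1\cdots g_s$ into irreducibles with $s=\Omega_f$, and evaluating at $z=0$ produces the basic identity $a_0=\prod_{j=1}^s g_j(0)$ in $R$. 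The two facts I would extract about each factor are that $g_j(0)$ is a nonunit (a power series is a unit precisely when its constant term is, so an irreducible factor cannot have unit constant term) and that $g_j(0)\neq 0$ (otherwise the product would vanish, contradicting $a_0\neq 0$); hence each $g_j(0)$ is a nonzero nonunit and $\Omega\bigl(g_j(0)\bigr)\geq 1$.

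For the upper bound, I would invoke additivity of $\Omega$ over products in the UFD $R$ applied to the identity above:
\[
\Omega(a_0)=\sum_{j=1}^s \Omega\bigl(g_j(0)\bigr)\geq \sum_{j=1}^s 1=s=\Omega_f.
\]
This half requires nothing beyond $a_0$ being a nonzero nonunit.

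For the lower bound I would argue by induction on $r=\omega(a_0)$, peeling off a single prime power at each stage with the help of Theorem~\ref{th:A}. The base case $r=1$ is immediate, since $f$ is a nonzero nonunit and so has at least one irreducible factor. For $r\geq 2$, I would split $a_0=mn$ with $m=u\,p_1^{k_1}$ and $n=\prod_{i=2}^r p_i^{k_i}$; because $p_1$ divides $m$ but not $n$, these are nonassociate nonunits, so the construction in the proof of Theorem~\ref{th:A} factors $f=g\,h$ with $g(0)=m$ and $h(0)=n$. Now $\omega\bigl(h(0)\bigr)=r-1$, so the induction hypothesis gives $\Omega_h\geq r-1$, while $\Omega_g\geq 1$ as $g$ is a nonzero nonunit; since the number of irreducible factors is additive over products in $R[[z]]$, I obtain $\Omega_f=\Omega_g+\Omega_h\geq r=\omega(a_0)$.

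Putting the two bounds together yields $\omega(a_0)\leq\Omega_f\leq\Omega(a_0)$, and the final assertion follows at once: if $a_0$ is an associate to a prime, then $\omega(a_0)=\Omega(a_0)=1$ pins $\Omega_f$ to $1$, so $f$ is irreducible. I expect the main obstacle to lie in the inductive split rather than in the counting: one must confirm that the two pieces $m$ and $n$ of $a_0$ are genuinely nonassociate so that Theorem~\ref{th:A} is applicable, and that the factorization it supplies really has the prescribed constant terms $m$ and $n$, which is what keeps the induction's bookkeeping on $\omega$ accurate.
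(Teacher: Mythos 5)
Your proof is correct, and the upper bound is obtained exactly as in the paper: evaluate the irreducible factorization at $z=0$, note each constant term is a nonzero nonunit, and use additivity of $\Omega$ in the UFD $R$. For the lower bound you and the paper both lean on Theorem~\ref{th:A}, but in opposite directions. The paper argues by contradiction: if $\Omega_f<\omega(a_0)$, then by a pigeonhole count on $\omega$ one of the irreducible factors $f_\alpha$ must have $f_\alpha(0)$ not associate to a prime power, whence Theorem~\ref{th:A} says $f_\alpha$ is reducible --- contradiction. You instead run an induction on $\omega(a_0)$, using the \emph{constructive} content of Theorem~\ref{th:A}'s proof to split off one prime power at a time with prescribed constant terms $m=up_1^{k_1}$ and $n=\prod_{i\ge2}p_i^{k_i}$, then add up $\Omega_g+\Omega_h$. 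Both are sound; yours is slightly longer but has the virtue of actually exhibiting a factorization into $\omega(a_0)$ nontrivial pieces (it is essentially the argument the paper later reuses for Theorem~\ref{th:1b} and Theorem~\ref{th:8}), while the paper's contradiction argument is shorter and needs only the contrapositive of Theorem~\ref{th:A}. Your parenthetical worry is well placed but harmless: the two pieces $m$ and $n$ are not merely nonassociate but coprime (no common prime divisor), which is what the B\'ezout step $mu+nv=1$ in the proof of Theorem~\ref{th:A} actually requires, and that proof does deliver factors with constant terms exactly $m$ and $n$.
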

\begin{proof}[\bf Proof of Theorem \ref{th:1}]
Let $f(z)=f_1(z)\cdots f_r(z)$ be a product of $r$ irreducible factors $f_1,\ldots,f_r$ in $R[[z]]$ so that $r=\Omega_f$. Then $0\neq a_0=f_1(0)\cdots f_r(0)$. This in view of the fact that $f_i$ is a nonzero nonunit in $R[[z]]$ for each $i$ enforces $r\leq \Omega(a_0)$.

To obtain the remaining inequality, we make the use of Theorem \ref{th:A} as follows.  If $r=1$, then $f$ is irreducible. So, by Theorem \ref{th:A}, we find that $a_0$ is associate to a prime power. Consequently, $\omega(a_0)=1=\Omega_f$ so that the theorem holds for $r=1$. Thus, we may assume that $r>1$. Suppose on the contrary that $r<\omega(a_0)$. Since $a_0=f_1(0)\cdots f_r(0)$ and each $f_i(0)$ is nonzero nonunit in $R$, and since $2\leq r<\omega(a_0)$, it follows that one of the $r$ factors $f_1(0)$, $\ldots$, $f_r(0)$ is not associate to a prime power. Suppose that $f_\alpha(0)$ is not a prime power for some index $\alpha$, where we know that $f_\alpha(0)$ is nonunit in $R$, since $f_\alpha$ is a nonunit in $R[[z]]$. By Theorem \ref{th:A}, $f_\alpha$ is not irreducible in $R[[z]]$. This is a contradiction, since each of $f_1$, $\ldots$, $f_r$ was assumed to be irreducible in $R[[z]]$.
\end{proof}
The lower bound $\omega(a_0)$ in Theorem \ref{th:1} was proved earlier in \cite[Corollary 2.5]{Elliot} but our approach is different. We note that Theorem \ref{th:1} recovers one of the irreducibility criteria of the paper \cite{Daniel2008}. The bounds mentioned in Theorem \ref{th:1} for the number of irreducible factors of $f$ are best possible in the sense that there exist power series attaining these bounds. In view of this, we have the following corollary to Theorem \ref{th:1}.
\begin{corollary}\label{c:1}
Let $R$ be a principle ideal domain. Let $f=\sum_{i=0}^\infty a_i z^i \in R[[z]]$ be such that $a_0$ is nonzero nonunit. If $a_0$ is square free, then $f$ is a product of exactly $\omega(a_0)$ irreducible factors in $R[[z]]$.
\end{corollary}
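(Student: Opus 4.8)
The plan is to deduce this immediately from the sandwich inequality in Theorem \ref{th:1}, by observing that the hypothesis of square-freeness forces the two prime-counting functions bounding $\Omega_f$ to coincide. The central observation is that for a nonzero nonunit $a_0$ in a PID $R$, being square-free means precisely that in its prime factorization every prime occurs with multiplicity one; consequently the count of nonassociate prime divisors and the count of nonassociate prime divisors with multiplicity agree, that is, $\omega(a_0)=\Omega(a_0)$. This is the only structural fact about $a_0$ that I would need to extract from the hypothesis.

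With this in hand, I would simply invoke Theorem \ref{th:1}, which applies verbatim since $a_0$ is assumed to be a nonzero nonunit. That theorem yields
\begin{eqnarray*}
\omega(a_0)\leq \Omega_f\leq \Omega(a_0)=\omega(a_0),
\end{eqnarray*}
where the final equality is the square-free observation above. The outer terms being equal forces $\Omega_f=\omega(a_0)$, so $f$ decomposes into exactly $\omega(a_0)$ irreducible factors in $R[[z]]$, counted with multiplicity, which is the desired conclusion.

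I do not anticipate any genuine obstacle here, since the result is a direct corollary of the established bounds; the content lies entirely in recognizing that square-freeness collapses the interval $[\omega(a_0),\Omega(a_0)]$ to a single point. If one wished to emphasize that the $\omega(a_0)$ irreducible factors are moreover pairwise nonassociate, I would additionally note that writing $a_0=f_1(0)\cdots f_r(0)$ with $r=\Omega(a_0)$ nonunit factors of a square-free element compels each constant term $f_i(0)$ to be associate to a distinct prime; but this refinement is not required for the stated claim.
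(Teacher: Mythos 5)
Your proposal is correct and follows exactly the paper's own argument: apply the sandwich inequality $\omega(a_0)\leq \Omega_f\leq \Omega(a_0)$ from Theorem \ref{th:1} and observe that square-freeness gives $\omega(a_0)=\Omega(a_0)$, collapsing the bounds. The additional remark about the factors being pairwise nonassociate is a harmless extra not present in (and not needed for) the paper's proof.
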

\begin{proof}[\bf Proof of Corollary \ref{c:1}]
    By Theorem \ref{th:1}, we have $\omega(a_0)\leq \Omega_f\leq \Omega(a_0)$. Since $a_0$ is square free, we must have $\omega(a_0)=\Omega(a_0)$. So, we have $\omega(a_0)=\Omega_f=\Omega(a_0)$.
 \end{proof}
Our next  factorization result provides an upper bound on the number of irreducible factors of a given nonzero nonunit in $\mathbb{Z}[[z]]$ in connection with the prime factorization of the constant and a higher order coefficient.
\begin{theorem}\label{th:2}
    Let $f = \sum_{i=0}^\infty a_i z^i\in \Bbb{Z}[[z]]$ be such that $a_0=\pm p^k$ for some  positive integers $k$ and a prime $p$. Suppose there exists an index $j\geq 1$ for which $p$ does not divide $a_j$. Then $f$ is a product of at most $\min\{k,j\}$ irreducible factors in $\mathbb{Z}[[z]]$. In particular, if $k=1$ or $j=1$, then $f$ is irreducible.
\end{theorem}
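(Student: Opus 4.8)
The plan is to establish the two bounds $\Omega_f\le k$ and $\Omega_f\le j$ separately; the first is immediate, and the second is the substantive one. Since $a_0=\pm p^k$, we have $\Omega(a_0)=k$, so Theorem \ref{th:1} gives $\Omega_f\le\Omega(a_0)=k$ at once. It therefore remains to prove $\Omega_f\le j$, after which $\Omega_f\le\min\{k,j\}$ follows.

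For the bound $\Omega_f\le j$, I would pass to the residue field by reducing coefficients modulo $p$. Write $f=f_1\cdots f_r$ as a product of $r=\Omega_f$ irreducible factors in $\mathbb{Z}[[z]]$, and let $\bar{\phantom{x}}\colon\mathbb{Z}[[z]]\to\mathbb{F}_p[[z]]$ denote the coefficientwise reduction map, a ring homomorphism with kernel $(p)$. The key observation is that each factor acquires at least one power of $z$ after reduction: since $f_i(0)$ is a nonzero nonunit dividing $a_0=\pm p^k$ in the UFD $\mathbb{Z}$, necessarily $f_i(0)=\pm p^{k_i}$ with $k_i\ge 1$, so $p\mid f_i(0)$ and hence $\overline{f_i}$ has vanishing constant term, that is $\operatorname{ord}(\overline{f_i})\ge 1$.

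Now I would use that $\mathbb{F}_p[[z]]$ is an integral domain, so that the order function is additive on products. From $\overline{f}=\overline{f_1}\cdots\overline{f_r}$ we obtain
\[
\operatorname{ord}(\overline{f})=\sum_{i=1}^r\operatorname{ord}(\overline{f_i})\ge r.
\]
On the other hand, $\operatorname{ord}(\overline{f})$ is precisely the least index $n$ with $p\nmid a_n$; the hypothesis $p\nmid a_j$ forces this least index to be at most $j$ (and in particular guarantees $\overline{f}\neq 0$, which also justifies that none of the $\overline{f_i}$ vanish, so the additivity above is legitimate). Hence $r\le\operatorname{ord}(\overline{f})\le j$, and combining with $\Omega_f\le k$ yields $\Omega_f=r\le\min\{k,j\}$.

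Finally, for the last assertion, note that $f$ is a nonzero nonunit (its constant term $\pm p^k$ is), so $\Omega_f\ge 1$; if $k=1$ or $j=1$ then $\min\{k,j\}=1$ forces $\Omega_f=1$, i.e. $f$ is irreducible. There is no serious obstacle here: the crux is simply the passage to $\mathbb{F}_p[[z]]$ together with additivity of $\operatorname{ord}$, and the only point demanding care is verifying that the reduction of each irreducible factor is a nonzero power series of positive order—this is exactly where the two hypotheses $a_0=\pm p^k$ and $p\nmid a_j$ enter.
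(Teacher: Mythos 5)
Your proof is correct, but it takes a different route from the paper's. The paper does not prove Theorem \ref{th:2} in isolation: it deduces it as the case $\ell=0$ of the more general Theorem \ref{th:2G} over a discrete valuation domain. There, the bound $r\le k$ is obtained directly by adding the valuations $v(f_i(0))\ge 1$ (rather than by citing Theorem \ref{th:1}, though that works equally well), and the bound $r\le j+\ell$ is obtained by expanding the coefficient $a_j$ of the product as the convolution sum $\sum_{i_1+\cdots+i_r=j}a_{1i_1}\cdots a_{ri_r}$ and observing that when $r>j$ every term contains at least $r-j$ constant-term factors, each of valuation $\ge 1$, forcing $v(a_j)\ge r-j>\ell$. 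Your argument replaces this convolution estimate by passage to the residue ring $\mathbb{F}_p[[z]]$ and additivity of $\operatorname{ord}$ on the integral domain $\mathbb{F}_p[[z]]$; all the steps check out, including the verification that each $\overline{f_i}$ is nonzero of positive order. What your version buys is a cleaner, more conceptual proof of the stated theorem; what the paper's version buys is that the same computation generalizes immediately to the refined bound $\min\{k,j+\ell\}$ of Theorem \ref{th:2G}, which reduction modulo $p$ cannot see, since the order of $\overline{f}$ only detects divisibility by the first power of $p$ and would locate the first coefficient with $p\nmid a_i$ rather than the datum $\pi^{\ell}\mid a_j$, $\pi^{\ell+1}\nmid a_j$.
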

The upper bound on the number of irreducible factors of $f$ as mentioned in Theorem \ref{th:2} is best possible as follows from the following example.

For any prime number $p$ and a positive integer $n$, the power series $(p-z)^n\in \mathbb{Z}[[z]]$ is a product of exactly $n$ irreducible factors in $\mathbb{Z}[[z]]$, since $p-z$ is irreducible in $\mathbb{Z}[[z]]$, where $p$ does not divide the coefficient of $z^n$ in $(p-z)^n$ so that here, $j=n$.

Observe that if the polynomial
\begin{eqnarray*}
p^n+p^m \gamma z+a_2z^2+a_3z^3+\cdots+a_d z^d \in \mathbb{Z}[[z]]
\end{eqnarray*}
satisfies the hypothesis of Theorem \ref{th:A2} such that $f$
is reducible in $\mathbb{Z}[[z]]$, then by Theorem \ref{th:2},  $f$ is a product of exactly two irreducible factors in $\mathbb{Z}[[z]]$.

We will now state and prove a more general form of Theorem \ref{th:2} as follows.
\begin{theorem}\label{th:2G}
  Let $(R,v)$ be a discrete valuation domain with a uniformizing parameter $\pi$  for $R$. Let
  $f = \sum_{i=0}^\infty a_i z^i\in R[[z]]$ be such that $a_0= u \pi^k$ for some  unit $u\in R$ and positive integers $k$. Suppose there exists a nonnegative integer $\ell$  and an index $j\geq 1$ such that $\pi^{\ell}$ divides $a_{j}$ but  $\pi^{\ell+1}$ does not divide $a_{j}$. Then $f$ is a product of at most $\min\{k,j+\ell\}$ irreducible factors in $R[[z]]$. In particular, if $k=1$ or $j=1$ and $\ell=0$, then $f$ is irreducible.
\end{theorem}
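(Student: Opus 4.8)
The plan is to bound $\Omega_f$ above by $k$ and by $j+\ell$ separately; the minimum of the two is exactly the asserted bound. Since a discrete valuation domain is a PID, $R[[z]]$ is a UFD, so I may write a factorization $f=f_1\cdots f_r$ into $r=\Omega_f$ irreducible factors. Because $a_0\ne 0$, no factor is divisible by $z$, so each $f_i(0)$ is a nonzero nonunit; setting $k_i=v(f_i(0))\ge 1$ and using additivity of $v$ on $a_0=\prod_i f_i(0)$ yields $\sum_{i=1}^r k_i=k$, and in particular $r\le k$ (this recovers the upper bound of Theorem \ref{th:1}). It then remains to establish the sharper inequality $r\le j+\ell$.

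To reach $j+\ell$ I would extract the coefficient $a_j$ from the product. Writing $f_i=\sum_{t\ge 0}c_{i,t}z^t$ and $V_i(t)=v(c_{i,t})$, one has
\begin{eqnarray*}
a_j=\sum_{t_1+\cdots+t_r=j}\;\prod_{i=1}^r c_{i,t_i},
\end{eqnarray*}
so the ultrametric inequality gives $\ell=v(a_j)\ge\min_{t_1+\cdots+t_r=j}\sum_{i=1}^r V_i(t_i)$. To control this minimum I pass to the residue field $R/(\pi)$: reducing coefficients modulo $\pi$ sends $f_i$ to an element of $(R/(\pi))[[z]]$ of order $m_i$, where $m_i\ge 1$ because $f_i(0)$ is a nonunit (and $m_i=+\infty$ exactly when $\pi\mid f_i$). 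The key structural fact is that $f_i$ can have a unit coefficient $c_{i,t}$, equivalently $V_i(t)=0$, only in degrees $t\ge m_i$.

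The crux, and what I expect to be the main obstacle, is a counting estimate showing that the constraint $\sum_i t_i=j$ limits how many factors can simultaneously contribute a unit coefficient. Fix an admissible tuple and let $S=\{i:V_i(t_i)=0\}$. For each $i\in S$ the fact just noted forces $t_i\ge m_i\ge 1$, whence $|S|\le\sum_{i\in S}t_i\le\sum_{i=1}^r t_i=j$; and for $i\notin S$ we have $V_i(t_i)\ge 1$. Therefore $\sum_{i=1}^r V_i(t_i)\ge r-|S|\ge r-j$ for every admissible tuple, so $\ell\ge r-j$, that is $r\le j+\ell$. Combining the two bounds gives $\Omega_f=r\le\min\{k,j+\ell\}$. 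The stated special cases are then immediate: if $k=1$ then $r\le\min\{1,j+\ell\}=1$, and if $j=1$ and $\ell=0$ then $r\le\min\{k,1\}=1$, so $f$ is irreducible in each case. The conceptual heart is the passage from $v(a_j)=\ell$ to the factor count, realized by combining the ultrametric estimate for $a_j$ with the degree budget $\sum t_i=j$ after reduction modulo $\pi$.
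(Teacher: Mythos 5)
Your proof is correct and follows essentially the same route as the paper's: the bound $r\le k$ comes from additivity of $v$ on the constant terms, and the bound $r\le j+\ell$ comes from the convolution formula for $a_j$ together with the observation that in each term $\prod_i c_{i,t_i}$ with $\sum_i t_i=j$ at least $r-j$ of the indices $t_i$ must vanish, each contributing valuation at least $1$. The only cosmetic differences are that you argue directly via the set $S$ rather than by contradiction, and your detour through the residue-field orders $m_i$ is unnecessary (you only use $V_i(0)\ge 1$, which already forces $t_i\ge 1$ whenever $V_i(t_i)=0$).
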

Theorem \ref{th:2} corresponds to the case $\ell=0$  of Theorem \ref{th:2G} with $R=\mathbb{Z}$, $\pi=p$, and $v=v_p$, the $p$-adic valuation of $\mathbb{Z}$. Further, Theorem \ref{th:B} is precisely the case  $k=j=1=\ell+1$ of Theorem \ref{th:2G}.

In view of Theorem \ref{th:2G}, the power series $f = \sum_{i=0}^\infty a_i z^i\in R[[z]]$ with $a_0=u \pi^k$  is a product of  at most
\begin{eqnarray*}
\min\{k,~v(a_1)+1,~v(a_2)+2,~\ldots\}
\end{eqnarray*}
irreducible factors in $R[[z]]$.
\begin{proof}[\bf Proof of Theorem \ref{th:2G}]
Let $f(z)=f_1(z)\cdots f_r(z)$ be a product of $r$ irreducible factors $f_i$ in $R[[z]]$, $1\leq i\leq r$.  Assume without loss of generality that each $f_i$ is nonconstant. Since $f_i(0)$ is a nonzero nonunit, we have  $v(f_i(0))\geq 1$ for each $i=1,\ldots,r$. Sice $\pi$ is a uniformizing parameter for $R$, we have $v(\pi)=1$ and $f_i(0)=u_i\pi^{k_i}$ for some unit $u_i\in R$ and positive integer $k_i$ for every $i=1,\ldots,r$. Since we have
\begin{eqnarray*}
u\pi^k=f(0)=f_1(0)\cdots f_r(0),
\end{eqnarray*}
it follows that
\begin{eqnarray*}
k=v(u\pi^k)=v(f_1(0))+\cdots +v(f_r(0))=k_1+\cdots+k_r,
\end{eqnarray*}
which in view of the fact that $k_i\geq 1$ for each $i$ establishes $r\leq k$.

In the preceding paragraph, we have shown that $r\leq k$. In view of this, we may assume without loss of generality that $k\geq j+\ell$. Assume on the contrary that $r>j+\ell$, that is, $r-j> \ell$. If we write
\begin{eqnarray*}
f_i=\sum_{t=0}^{\infty}a_{it}z^t\in R[[z]],
\end{eqnarray*}
and substitute for $f_i$ in the equation $f(z)=f_1(z)\cdots f_r(z)$, then we find that
\begin{eqnarray*}
a_j &=& \sum_{i_1+i_2+\cdots+i_r=j}a_{1i_1}a_{2i_2}\cdots a_{ri_r},
\end{eqnarray*}
where the indices under the summation sign satisfy $0\leq i_1,\ldots,i_r\leq j$. Since $r-j>\ell\geq 0$, we have $r>j$. Consequently, for any given $r$-tuple $i_1,\ldots,i_r$ with $0\leq i_1,\ldots,i_r\leq j$ and $i_1+\cdots+i_r=j$, at least $r-j$ of the indices $i_1,\ldots,i_r$ is equal to 0. Since  $f_\alpha(0)=a_{\alpha0}$ and $v(f_\alpha(0))\geq 1$ for each index $\alpha\in \{1,\ldots,r\}$, it follows that  $v(a_{1i_1}a_{2i_2}\cdots a_{ri_r})\geq r-j$ for each such term. We then arrive at the following:
\begin{eqnarray*}
v(a_j)=v\Bigl(\sum_{i_1+i_2+\cdots+i_r=j}a_{1i_1}a_{2i_2}\cdots a_{ri_r}\Bigr)\geq \min_{\sum_{i_1+i_2+\cdots+i_r=j}} \Bigl\{v(a_{1i_1}a_{2i_2}\cdots a_{ri_r})\Bigr\}\geq r-j>\ell
\end{eqnarray*}
which contradicts the hypothesis.
\end{proof}
Our next result is the following irreducibility criterion.
\begin{theorem}\label{th:3}
Let $f=\sum_{n=0}^\infty a_nz^n \in \mathbb{Z}[[z]]$ be such that $a_0=\pm p^k$ for some prime $p$ and positive integer $k\geq 2$. Suppose there exists a positive integer $m$ for which
\begin{eqnarray*}
p^\ell\mid a_{(k-\ell) m+i},~p^{\ell+1}\nmid a_{(k-\ell) m+i},
\end{eqnarray*}
for every $i=1, \hdots, m$, and for every $\ell=1,\hdots,k$. If $p$ does not divide $a_{km+1}$, 
then $f$ is irreducible in $\mathbb{Z}[[z]]$.
\end{theorem}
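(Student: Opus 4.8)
The plan is to read the divisibility hypotheses as a statement about the $p$-adic Newton polygon of $f$, to observe that this polygon is a single \emph{primitive} segment, and then to invoke the additivity of Newton polygons under multiplication, the coprimality $\gcd(k,km+1)=1$ supplying the obstruction to any nontrivial factorization. Throughout I write $v=v_p$ for the $p$-adic valuation.

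First I would convert the hypotheses into exact valuations. The data give $v(a_0)=k$, the staircase $v(a_n)=k-\lfloor (n-1)/m\rfloor$ for $1\le n\le km$ (so $v=k$ on $1\le n\le m$, then $v=k-1$, and so on down to $v=1$ on $(k-1)m+1\le n\le km$), and finally $v(a_{km+1})=0$. Plotting the points $(n,v(a_n))$, I would compute the lower convex hull. Writing $j=\lfloor (n-1)/m\rfloor$ for an index $1\le n\le km$, one has $n\ge jm+1$, hence $kn\ge kjm+k>kjm+j=j(km+1)$ because $j\le k-1<k$; equivalently $(n,v(a_n))$ lies \emph{strictly above} the line joining $(0,k)$ to $(km+1,0)$. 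Therefore the negative-slope part of the Newton polygon of $f$ is the single segment from $(0,k)$ to $(km+1,0)$, of slope $-k/(km+1)$. Since $km+1\equiv 1\pmod k$ we have $\gcd(k,km+1)=1$, so this segment contains no interior lattice point; it is primitive. The coefficients with $n>km+1$ have nonnegative valuation and sit at $x>km+1$, so they do not affect this part of the polygon.

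Next, suppose for contradiction that $f=gh$ with $g=\sum b_nz^n$ and $h=\sum c_nz^n$ nonunits in $\mathbb Z[[z]]$. Since $b_0c_0=a_0=\pm p^k$ and the only units of $\mathbb Z$ are $\pm 1$, nonunitality forces $b_0=\pm p^s$ and $c_0=\pm p^t$ with $s,t\ge 1$ and $s+t=k$; in particular $v(b_0)=s\ge1$ and $v(c_0)=t\ge1$. Reducing modulo $p$ inside the domain $\mathbb F_p[[z]]$, the relation $\bar f=\bar g\,\bar h$ together with $\mathrm{ord}_z(\bar f)=km+1$ yields $\mathrm{ord}_z(\bar g)+\mathrm{ord}_z(\bar h)=km+1$; denote these orders by $d_g,d_h$. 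They are precisely the first indices at which $g$ and $h$ have a coefficient prime to $p$, and each is at least $1$ since $p\mid b_0$ and $p\mid c_0$. Thus $d_g,d_h\ge 1$ and $d_g+d_h=km+1$, and the Newton polygon of $g$ runs from $(0,s)$ down to $(d_g,0)$, similarly for $h$.

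Finally I would apply the additivity of Newton polygons under multiplication (the power-series form of Dumas' theorem): the negative-slope edges of the polygon of $f$ are obtained by concatenating, in order of slope, those of $g$ and of $h$. As the polygon of $f$ consists of the single slope $-k/(km+1)$, every edge of $g$ must carry this slope, so the total height drop of $g$ equals $d_g\cdot k/(km+1)$; but this drop is the integer $v(b_0)=s$, forcing $(km+1)\mid d_g k$ and hence $(km+1)\mid d_g$ since $\gcd(k,km+1)=1$. This is impossible for $1\le d_g\le km$, giving the contradiction, so $f$ is irreducible. The main obstacle is this additivity step: unlike for a polynomial, a power series has infinitely many terms, so the concatenation property needs justification. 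I would make it rigorous by passing to $\mathbb Z_p[[z]]$ (where $g$ and $h$ remain nonunits, their constant terms having positive valuation) and applying Weierstrass preparation to replace $f,g,h$ by genuine distinguished polynomials of degrees $km+1,\ d_g,\ d_h$, to which the classical Dumas additivity over the discrete valuation ring $\mathbb Z_p$ applies; setting up this Newton-polygon calculus for power series is the technical heart of the proof, after which the coprimality $\gcd(k,km+1)=1$ closes the argument at once.
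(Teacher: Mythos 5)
Your argument is correct, and its skeleton is the same as the paper's: read the hypotheses as saying that the $p$-adic Newton polygon of $f$ has a single negative-slope edge from $(0,k)$ to $(km+1,0)$, note that $\gcd(k,km+1)=1$ makes this edge primitive, and show that additivity of Newton polygons under multiplication then forbids any splitting $f=gh$ with both constant terms of positive valuation. The difference lies in how the two proofs justify the additivity step, which you rightly flag as the technical heart. The paper proves a general Dumas-type criterion (its Theorem \ref{th:6}) over an arbitrary discrete valuation domain and deduces Theorem \ref{th:3} from it, with the additivity supplied by Hoffmann's theorem on Newton polygons of products of power series (Theorem \ref{th:E}); you instead pass to the complete ring $\mathbb{Z}_p[[z]]$, invoke Weierstrass preparation to write $f=P_fU_f$, $g=P_gU_g$, $h=P_hU_h$ with distinguished polynomials of degrees $km+1$, $d_g$, $d_h$, conclude $P_f=P_gP_h$ by uniqueness, and apply the classical polynomial Dumas theorem. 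Your route is more self-contained (it avoids citing Hoffmann) but trades generality for completeness of the coefficient ring, whereas the paper's route works over any DVR and yields the stronger Theorem \ref{th:6} along the way. One small step you leave implicit and should record: after preparation you apply Dumas to $P_f$, so you must check that $NP(P_f)$ is still the single segment from $(0,k)$ to $(km+1,0)$. This follows in one line from $P_f=f\cdot U_f^{-1}$ with $U_f^{-1}=\sum w_jz^j$, $v(w_j)\ge 0$: each coefficient of $P_f$ is $\sum_{i+j=n}a_iw_j$, and since the bounding line $L$ is decreasing, $v(a_i)\ge L(i)\ge L(n)$ for $i\le n$, so no point of $NP(P_f)$ drops below $L$, while the endpoints $(0,k)$ and $(km+1,0)$ are forced by $v(P_f(0))=k$ and monicity. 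With that observation supplied, your proof is complete.
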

In particular, Theorem  \ref{th:3} for the case $k=2$ reduces to one of the irreducibility criteria proved in \cite{Daniel2008}. The cases $k=3,4,5$ of Theorem \ref{th:3} were proved recently in the paper \cite{Dutta2024}.

In  \cite{J-S-3}, the authors proved two irreducibility criteria for polynomials over $\mathbb{Z}$. In an attempt to extend  these irreducibility results to the ring of power series, we could obtain the following result.
\begin{theorem}\label{th:4}
    Let $f = \sum_{i=0}^\infty a_i z^i\in \Bbb{Z}[[z]]$ be such that $a_0=\pm p^k$ for some  positive integers $k\geq 2$ and a prime $p$. Suppose there exists an index $j\geq 1$ for which $p^k$ divides $\gcd(a_1,\ldots,a_{j-1})$ and $p$ does not divide $a_j$. If $\gcd(k,j)=1$, then $f$ is  irreducible in $\mathbb{Z}[[z]]$.
\end{theorem}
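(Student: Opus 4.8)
The plan is to pass from $\mathbb{Z}[[z]]$ to the complete discrete valuation ring $\mathbb{Z}_p[[z]]$ and read off irreducibility from the Newton polygon of $f$ with respect to the $p$-adic valuation $v_p$. First I would record the reduction. Since $f(0)=\pm p^k$ is a nonzero nonunit, any nontrivial factorization $f=gh$ in $\mathbb{Z}[[z]]$ has $g(0)=\pm p^a$ and $h(0)=\pm p^b$ with $a,b\ge 1$ and $a+b=k$; such a factorization is still nontrivial in $\mathbb{Z}_p[[z]]$, so it suffices to prove that $f$ is irreducible in $\mathbb{Z}_p[[z]]$. Moreover $p\nmid a_j$ forces $p\nmid f$, so $f$ has no constant irreducible factor (the only constant irreducible being an associate of $p$).

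Next I would set up the combinatorial skeleton. Suppose $f=f_1\cdots f_r$ with $r\ge 2$ nonconstant irreducible factors $f_i$ in $\mathbb{Z}_p[[z]]$. Writing $f_i(0)=u_i p^{k_i}$ with $u_i$ a unit and $k_i\ge 1$, applying $v_p$ to $f(0)=\prod_{i=1}^r f_i(0)$ gives $\sum_{i=1}^r k_i=k$. Reducing modulo $p$ in the domain $\mathbb{F}_p[[z]]$ and setting $j_i=\mathrm{ord}_z(\overline{f_i})\ge 1$ for the order of the reduction, additivity of the order yields $\sum_{i=1}^r j_i=\mathrm{ord}_z(\overline{f})=j$, the last equality because the hypotheses make $j$ the least index with $p\nmid a_j$. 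Note $j_i$ is exactly the abscissa at which $f_i$ first attains valuation $0$.

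The heart of the argument is the Newton polygon. The hypotheses $v_p(a_0)=k$, $v_p(a_i)\ge k$ for $1\le i\le j-1$, and $v_p(a_j)=0$ show that the lower boundary of $\{(i,v_p(a_i))\}$ on $[0,j]$ is the single segment joining $(0,k)$ to $(j,0)$, of slope $-k/j$, and that beyond $x=j$ the polygon stays at height $0$. Since the Newton polygon of a product is the concatenation (Minkowski sum) of the Newton polygons of its factors, and the descending part of the polygon of $f$ is this one segment of slope $-k/j$, each factor $f_i$ must itself have a one-segment descending polygon from $(0,k_i)$ to $(j_i,0)$ of the same slope; hence $k_i/j_i=k/j$, that is $j\,k_i=k\,j_i$ for every $i$. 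Coprimality then finishes it: from $j\,k_i=k\,j_i$ and $\gcd(k,j)=1$ we get $j\mid j_i$, and since $1\le j_i\le\sum_i j_i=j$ this forces $j_i=j$, whence $r=1$, contradicting $r\ge 2$.

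I expect the main obstacle to be making the multiplicativity of the Newton polygon rigorous for power series rather than polynomials. The cleanest route is to reduce, via Weierstrass preparation over $\mathbb{Z}_p$, to a distinguished factor: write $f=U\cdot P$ with $U$ a unit of $\mathbb{Z}_p[[z]]$ and $P$ distinguished of degree $j$; because $U$ is a unit its polygon is flat, so $P$ inherits the single-segment polygon of slope $-k/j$ from $f$. By the Eisenstein--Dumas criterion a polynomial whose Newton polygon is one segment of slope $-k/j$ with $\gcd(k,j)=1$ is irreducible over $\mathbb{Q}_p$, hence the monic $P$ is irreducible in $\mathbb{Z}_p[z]$; and irreducibility of a distinguished polynomial transfers to $\mathbb{Z}_p[[z]]$ since, by uniqueness in Weierstrass preparation, every factorization of $P$ in $\mathbb{Z}_p[[z]]$ is a factorization into distinguished polynomials. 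Thus $f$ is irreducible in $\mathbb{Z}_p[[z]]$ and therefore in $\mathbb{Z}[[z]]$.
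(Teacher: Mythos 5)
Your proof is correct, but it reaches the conclusion by a genuinely different route from the paper. The paper first proves a general Dumas-type criterion (Theorem \ref{th:6}) over an arbitrary discrete valuation domain, using Hoffmann's theorem on Newton polygons of products of power series (Theorem \ref{th:E}) to control how the single descending edge of $NP(f)$ distributes over the factors, and then reads Theorem \ref{th:4} off as the special case of a one-edge polygon of slope $-k/j$. You instead complete at $p$ and invoke Weierstrass preparation in $\mathbb{Z}_p[[z]]$ to split off a distinguished polynomial $P$ of degree $j$; since $U^{-1}\in\mathbb{Z}_p[[z]]$, the coefficients of $P=U^{-1}f$ below degree $j$ have valuation at least $k$, so $P$ is an Eisenstein--Dumas polynomial, irreducible over $\mathbb{Q}_p$ by the classical polynomial criterion, and uniqueness of the Weierstrass factorization transfers this back to $\mathbb{Z}_p[[z]]$ and hence to $\mathbb{Z}[[z]]$. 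Be aware that the blanket multiplicativity of Newton polygons you assert in your middle paragraph is false for power series in general --- the paper gives the counterexample $g=p-pz$, $h=p+\sum_{i\ge 1}p(1-p)z^i$ with $gh=p^2-p^3z$ --- but you correctly flag this and your final Weierstrass argument sidesteps it entirely, since there the product formula is only ever applied to a unit times a polynomial, where it is trivially verified. The trade-off: your argument is more classical and self-contained (no Hoffmann machinery), but it leans on completeness of $\mathbb{Z}_p$ and so is tailored to $\mathbb{Z}$ (or requires passing to completions in general), whereas the paper's route works uniformly over any discrete valuation domain and yields the stronger Theorem \ref{th:6} and its corollaries (Theorems \ref{th:3}, \ref{th:8}, \ref{th:7}, \ref{th:9}) along the way.
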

The following analogue of Eisenstein's irreducibility criterion for power series is immediate from  Theorem \ref{th:4}.
\begin{corollary}\label{c:2}
Let $f=\sum_{i=0}^\infty a_iz^i\in \mathbb{Z}[[z]]$ be such that $a_0=\pm p^k$ for a prime $p$ and positive integer $k$. Suppose there exists a positive integer $n$ coprime to $k$ for which $p^k$ divides $a_i$ for each $i=1,\ldots, {n-1}$, and $p$ does not divide $a_n$. Then $f$ is irreducible in $\mathbb{Z}[[z]]$.
\end{corollary}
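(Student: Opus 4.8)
The plan is to obtain Corollary \ref{c:2} as a direct specialization of Theorem \ref{th:4}, after separating out the single degenerate value of $k$ that the theorem does not cover. The two statements are essentially the same assertion, with the Corollary phrasing the divisibility hypothesis coefficient-by-coefficient ($p^k\mid a_i$ for $1\le i\le n-1$) rather than through a gcd, and spelling out the coprimality condition as ``$n$ coprime to $k$.'' So the core of the argument is simply to check that the hypotheses of Theorem \ref{th:4} hold with the index $j$ taken to be $n$.

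First I would dispose of the case $k=1$, which falls outside the range $k\ge 2$ required by Theorem \ref{th:4}. In this case $a_0=\pm p$ is an associate of a prime in $\mathbb{Z}$, and the concluding ``in particular'' clause of Theorem \ref{th:1} immediately gives that $f$ is irreducible in $\mathbb{Z}[[z]]$. This handles the borderline value cleanly and independently.

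For the main case $k\ge 2$, I would set $j=n$ and verify the three hypotheses of Theorem \ref{th:4}. The hypothesis that $p^k$ divides each of $a_1,\ldots,a_{n-1}$ is exactly the statement that $p^k\mid\gcd(a_1,\ldots,a_{j-1})$ (vacuously true when $n=1$); the hypothesis $p\nmid a_n$ is precisely $p\nmid a_j$; and the assumption that $n$ is coprime to $k$ is $\gcd(k,j)=1$. With these checks in place, Theorem \ref{th:4} yields that $f$ is irreducible in $\mathbb{Z}[[z]]$, completing the argument.

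Since the statement is advertised as immediate from Theorem \ref{th:4}, there is no substantive obstacle here: the only point requiring care is that Theorem \ref{th:4} is stated for $k\ge 2$, whereas the Corollary permits $k=1$, so the $k=1$ case must be treated separately via Theorem \ref{th:1}. The remaining work is the routine reformulation of the pointwise divisibility conditions as a gcd divisibility condition.
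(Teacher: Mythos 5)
Your proposal is correct and follows exactly the route the paper intends: the paper gives no separate proof, stating only that the corollary is ``immediate from Theorem \ref{th:4},'' and your verification that $j=n$ satisfies the hypotheses is precisely that routine specialization. Your separate treatment of the case $k=1$ (via the ``in particular'' clause of Theorem \ref{th:1}) is a small point of care the paper glosses over, since Theorem \ref{th:4} is stated only for $k\ge 2$, and it is handled correctly.
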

In  the sequel, we will prove a generalization of Theorems \ref{th:3} \& \ref{th:4} in a somewhat more general set up (See Theorem \ref{th:6}). To do so, we will need Newton polygon technique for power series. Here, we describe in brief, one such technique developed by Hoffmann \cite{Hoffman1994} for formal power series over a field $\K$ with a valuation $v:\K\rightarrow \mathbb{R}\cup \{\infty\}$ for which $\mathbb{Z}\subseteq v(\K)$. We proceed as follows.
\begin{definition}[Hoffmann \cite{Hoffman1994}]
For any integer $n$, let $f=\sum_{i=n}^\infty a_i z^i \in \K((z))$ be a Laurent series. For each $i$ with $a_i\neq 0$, let
$L_i=\{(i,y)\in \mathbb{R}^2~|~y\geq v(a_i)\}$.
If $a_i=0$, we define $L_i=\emptyset$. Let $L$ be the convex hull of the set $\cup_{i=n}^\infty L_i$. The boundary $\partial L$ of $L$ will be called the Newton polygon $NP(f)$ of $f$.
\end{definition}
\begin{definition}[Hoffmann \cite{Hoffman1994}] Let $\bar{\mathbb{R}}=\mathbb{R}\cup \{\pm \infty\}$ and $\bar{\mathbb{N}}=\mathbb{N}\cup \{\pm \infty\}$. For  $f\in \K[[z]]$,  let $f^*:\bar{\mathbb{R}}\rightarrow \bar{\mathbb{N}}$ be such that if $L$ is a segment of $NP(f)$ with finite slope $s$, and if $\ell$ is the length of the projection of $L$ along $x$-axis, then $f^*(s)=\ell$. If $s\in \mathbb{R}$  is not taken up as a slope of any segment of $NP(f)$, then we set $f^*(s)=0$. Further, we take $f^*(-\infty)=0$ unless $NP(f)$ consists of only $y$-axis in which case, we take $f^*(-\infty)=\infty$. Finally, we define $f^*(\infty)=0$ unless $f$ is a polynomial in which case we take $f^*(\infty)=\infty$.
\end{definition}
We observe that if $f$ factors as $f=gh$ in $\mathbb{Z}[[z]]$, then a segment in $NP(f)$ of slope $s$ can be recovered by translation of segments in $NP(g)$ and $NP(h)$ if and only if
\begin{eqnarray*}
f^*(s)=(gh)^*(s)=g^*(s)+h^*(s).
\end{eqnarray*}
Dumas Theorem \cite{D} in general fails for Newton polygons for power series. This can be seen from the following example:

If we take for a prime $p$, $v=v_p$, the $p$-adic valuation of $\mathbb{Q}$,  and if we take
\begin{eqnarray*}
g=p-pz,~h=p+\sum_{i=1}^\infty p(1-p)z^i\in \mathbb{Z}[[z]],
\end{eqnarray*}
then  $gh=p^2-p^3z$, and we have $g^*(0)=1$, $h^*(0)=\infty$, but $(gh)^*(0)=0$.

However, we have the following result of Hoffmann (see \cite[Theorem 1]{Hoffman1994}), which to some extent extends Dumas Theorem to power series, and this will serve our purpose.
 \begin{thmx}[Hoffmann \cite{Hoffman1994}]\label{th:E} Let $\K$ be a field with a valuation $v$ for which $\mathbb{Z}\subseteq v(\K)$.  Let $f,g\in \K[[z]]$ and suppose that $S=S(f)\leq S(g)$, where $S(f)=\sup\{s\in \bar{\mathbb{R}}~|~f^*(s)\neq 0\}$,  with a similar meaning for $S(g)$. Then each of the following statements holds.
 \begin{enumerate}
    \item[(i)] $(fg)^*(t)=f^*(t)+g^*(t)$ for all $t<S$.
    \item[(ii)] If $f^*(t)+g^*(t)\neq 0$ for infinitely many $t<S$, then  $(fg)^*(t)=0$ for all $t\geq S$.
    \item[(iii)]  Suppose that $f^*(t)+g^*(t)\neq 0$ for only finitely many $t<S$ and suppose further that if $S<\infty$ and $f^*(S)\neq 0$ (in which case necessarily $f^*(S)=\infty$) then $g^*(S)=0$. Then $(fg)^*(S)=\infty$ and $(fg)^*(t)=0$ for all $t>S$.
\end{enumerate}
 \end{thmx}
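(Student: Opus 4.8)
The plan is to reduce all three assertions of Theorem~\ref{th:E} to a single convexity/Legendre-transform computation on the ``supporting-line'' function of the Newton polygon, the point being that multiplicativity of this function breaks down exactly at infinite edges. Writing $g=\sum_j b_j z^j$ and, for $f=\sum_i a_iz^i$ and $t\in\mathbb{R}$, setting
\[
\phi_f(t)=\inf_{i}\bigl(v(a_i)-t\,i\bigr),
\]
the $y$-intercept of the highest line of slope $t$ lying weakly below $NP(f)$, one checks that $\phi_f$ is concave and piecewise affine where finite, and that $f^{*}(s)$ equals the downward kink $\phi_f'(s^-)-\phi_f'(s^+)$ of $\phi_f$ at $s$: as $t$ increases through $s$, the index touched by the supporting line jumps from the left to the right endpoint of the slope-$s$ edge, whose horizontal length is $f^{*}(s)$. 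Thus every statement about $f^{*},g^{*},(fg)^{*}$ can be read off from $\phi_f,\phi_g,\phi_{fg}$, and the only algebraic input is the comparison of $\phi_{fg}$ with $\phi_f+\phi_g$. The ultrametric inequality applied to $(fg)_n=\sum_{i+j=n}a_ib_j$ gives the easy bound $\phi_{fg}(t)\ge\phi_f(t)+\phi_g(t)$ for every $t$, so the real work is to decide where equality holds.

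For part (i) I would prove $\phi_{fg}(t)=\phi_f(t)+\phi_g(t)$ for all $t<S$. The hypothesis $t<S=S(f)\le S(g)$ guarantees that neither $f$ nor $g$ has an infinite edge of slope $t$ (an infinite edge forces its slope to be the maximal one), so the infima $\phi_f(t),\phi_g(t)$ are attained on finite nonempty index sets $I_f(t),I_g(t)$. Taking the extreme index $n^{*}=\max I_f(t)+\max I_g(t)$, the minimal-valuation term $a_ib_j$ with $i+j=n^{*}$ is attained \emph{uniquely} at $i=\max I_f(t),\ j=\max I_g(t)$, so no cancellation occurs and $v((fg)_{n^{*}})-t\,n^{*}=\phi_f(t)+\phi_g(t)$, which is the reverse inequality. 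Equality of $\phi$ on the open ray $(-\infty,S)$ makes the kinks add, giving $(fg)^{*}(t)=f^{*}(t)+g^{*}(t)$ for $t<S$. Conceptually this is just the Dumas mechanism: below the top slope the initial forms live in a graded \emph{domain} and cannot multiply to zero.

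Parts (ii) and (iii) are the genuinely power-series phenomena and where I expect the main difficulty, since the obstruction to pushing multiplicativity through $t=S$ is precisely an \emph{infinite} edge, whose initial form is an infinite sum in a completed graded ring and can therefore vanish upon multiplication---exactly the mechanism behind the counterexample $g=p-pz$, $h=p+\sum_{i\ge1}p(1-p)z^i$ recorded above. For (iii) the standing hypothesis forces, when $S<\infty$ (so $f$ is non-polynomial, as a polynomial would give $S(f)=\infty$), a single infinite edge of slope $S$ in $f$, because finitely many sub-$S$ edges together with $S(f)=S$ leave $f^{*}(S)=\infty$ as the only option; the condition ``$f^{*}(S)\neq0\Rightarrow g^{*}(S)=0$'' then guarantees that $g$ contributes only a finite initial form at slope $S$, and multiplying a nonzero finite initial form into the infinite slope-$S$ edge produces no cancellation. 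Hence $fg$ inherits an infinite edge of slope $S$, giving $(fg)^{*}(S)=\infty$ and, an infinite edge being terminal, $(fg)^{*}(t)=0$ for $t>S$. For (ii), infinitely many sub-$S$ slopes must accumulate at $S$ from below, so by part (i) the same infinitely many kinks occur in $\phi_{fg}$ below $S$; I would then use an asymptotic estimate on $v((fg)_n)-Sn$ as $n\to\infty$ to show these kinks accumulate at $S$ without any positive-length edge ever forming at or beyond $S$, i.e. $(fg)^{*}(t)=0$ for all $t\ge S$.

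The hard part, and the step I would budget the most care for, is this top-slope cancellation analysis: one must prove that a single infinite edge multiplied by a finite initial form never cancels---an integral-domain statement in the completed graded ring, using that the finite factor is a genuine nonzero Laurent polynomial---and, in the accumulating case, control the asymptotics of $\phi_{fg}$ near $S$ so as to exclude any spurious edge of slope $\ge S$. The attainment and sign bookkeeping for $\phi$ at $t=S$, distinguishing the three regimes $f^{*}(S)=\infty$, $f^{*}(S)=0$ with accumulation, and the polynomial convention $f^{*}(\infty)=\infty$, is routine but must be carried out case by case to match the three conclusions exactly.
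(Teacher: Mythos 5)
The paper does not prove Theorem~\ref{th:E} at all: it is imported verbatim as Theorem~1 of Hoffmann \cite{Hoffman1994}, so there is no in-paper argument to compare yours against; the relevant benchmark is Hoffmann's own proof, and your supporting-line/Legendre-transform strategy (the function $\phi_f(t)=\inf_i(v(a_i)-ti)$, superadditivity from the ultrametric inequality, and equality via a uniquely attained minimal-valuation term in the Cauchy product) is essentially that standard route. Your part (i) is complete in substance: for $t<S$ the existence of an edge of slope $s'\in(t,S]$ forces $v(a_i)-ti\to\infty$, so the infimum is finite and attained on a finite set, and the extremal-index argument at $n^{*}=\max I_f(t)+\max I_g(t)$ rules out cancellation; the identification $f^{*}(s)=\phi_f'(s^-)-\phi_f'(s^+)$ then transfers additivity of $\phi$ to additivity of edge lengths. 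Two remarks on the remaining parts. For (ii) you do not need any asymptotic estimate on $v((fg)_n)-Sn$: since the vertices of a Newton polygon have integer abscissae, every edge has horizontal length at least $1$, so the infinitely many slopes $t<S$ with $(fg)^{*}(t)\geq 1$ supplied by part (i) already tile $[x_0,\infty)$ and leave no room for an edge of slope $\geq S$. For (iii), the ``hard'' cancellation step you flag is in fact closed by the same uniqueness mechanism as in (i): under the hypotheses of (iii) with $S<\infty$ one checks (as you do) that $f$ has an infinite edge of slope $S$ and that $g^{*}(S)=0$ forces the slope-$S$ supporting line of $NP(g)$ to meet $NP(g)$ in a single vertex $(j_0,v(b_{j_0}))$ (indeed $S(g)>S$ or $g$ is a polynomial, since $S(g)=S$ with finitely many sub-$S$ edges would give $g^{*}(S)=\infty$); then for each $n=i+j_0$ with $(i,v(a_i))$ on $f$'s infinite edge the minimal-valuation pair is unique, so $(fg)$ inherits an infinite slope-$S$ edge. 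The paper's own counterexample $g=p-pz$, $h=p+\sum_i p(1-p)z^i$ confirms that the failure mode is exactly the one you isolate, namely the slope-$S$ supporting line of the finite factor meeting its polygon in more than one point. So the proposal is a correct and essentially standard outline; what it lacks is only the execution of the case bookkeeping at $t=S$ and $t=\pm\infty$, which you correctly identify as routine.
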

Note that any nonzero $f\in \K[[z]]$ is invertible if and only if $f(0)$ is a unit in $\K$, that is, $f(0)\neq 0$. Thus, the only irreducible elements in $\K[[z]]$ are associates of $z$.

Now we have the following generalization of Theorems \ref{th:3} \& \ref{th:4} which may be viewed as an extension of Dumas irreducibility criterion \cite{D} to formal power series.
\begin{theorem}\label{th:6}
Let $(R,v)$ be a discrete valuation domain with a uniformizing parameter $\pi$ for $R$. Let $f=\sum_{i=0}^\infty a_iz^i\in R[[z]]$ be such that $a_0=u\pi^k$ for some unit $u\in R$ and a positive integer $k$. Suppose there exists  a positive integer $n$ for which the following conditions are satisfied.
\begin{enumerate}
\item[(i)] $v(a_n)=0$ and $\gcd(k,n)=1$,
\item[(ii)] $\frac{k}{n}<\frac{v(a_i)}{n-i}$ for each $i=1,\ldots,n-1$.
\end{enumerate}
Then  $f$ is irreducible in $R[[z]]$.
\end{theorem}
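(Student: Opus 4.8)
The plan is to pass to the fraction field $\K=\mathrm{Frac}(R)$, regard the factorization inside $\K[[z]]$, and read off the obstruction from the Newton polygon $NP(f)$. First I would record what hypotheses (i) and (ii) say geometrically: since $v(a_0)=v(u\pi^k)=k$ and $v(a_n)=0$, the points $(0,k)$ and $(n,0)$ lie on $NP(f)$, while (ii) is precisely the statement that every intermediate point $(i,v(a_i))$, $1\le i\le n-1$, lies strictly above the segment joining $(0,k)$ to $(n,0)$; as $v(a_i)\ge 0=v(a_n)$ for all $i\ge n$, this segment of slope $s_0:=-k/n$ is the leftmost edge of $NP(f)$. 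Hence $f^*(s_0)=n$ and $f^*(s)=0$ for every $s<s_0$.

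Next I would suppose, for contradiction, that $f=gh$ with $g,h$ non-units in $R[[z]]$. Because $a_0=g(0)h(0)\neq 0$, both constant terms are nonzero non-units, so $v(g(0)),v(h(0))\ge 1$ and $v(g(0))+v(h(0))=k$. The crucial preliminary observation is that $s_0<\min\{S(g),S(h)\}$: every factor lies in $R[[z]]$, so all of its coefficients have valuation $\ge 0$, and a non-polynomial factor whose slopes were all negative would have a Newton polygon descending without bound below the $x$-axis, contradicting $v\ge 0$; thus $S\ge 0$ for a non-polynomial factor, whereas $S=\infty$ for a polynomial factor. In either case $S(g),S(h)\ge 0>s_0$. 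Relabelling so that $S(g)\le S(h)$ and applying Hoffmann's Theorem~\ref{th:E}(i) at $t=s_0<S(g)$ gives
\begin{eqnarray*}
n=f^*(s_0)=g^*(s_0)+h^*(s_0).
\end{eqnarray*}

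Finally I would exploit $\gcd(k,n)=1$. The vertices of a Newton polygon over a discrete valuation have integer coordinates, so any segment of slope $s_0=-k/n$ has a horizontal length that is a multiple of $n$; consequently $g^*(s_0),h^*(s_0)\in\{0,n\}$, and since they sum to $n$, one of them, say $g^*(s_0)$, equals $n$ while $h^*(s_0)=0$. Because $g^*(s)=0$ for $s<s_0$ as well, this slope-$s_0$ segment is the leftmost edge of $NP(g)$, running from $(0,v(g(0)))$ to $(n,v(g(0))-k)$. But $v(g(0))\le k-1$, so its right endpoint has negative height, contradicting $v\ge 0$ on $R$. This contradiction establishes irreducibility. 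I expect the main obstacle to be the verification that $s_0$ lies strictly below both $S(g)$ and $S(h)$, since that is exactly the hypothesis needed to invoke the clean multiplicativity of Theorem~\ref{th:E}(i); Dumas' additivity can genuinely fail for power series, as the displayed example with $gh=p^2-p^3z$ shows, so this step cannot be bypassed.
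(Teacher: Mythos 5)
Your proof is correct and follows essentially the same route as the paper's: identify the single edge of $NP(f)$ of slope $-k/n$ and length $n$, rule out slopes below $0$ for the factors so that Hoffmann's Theorem~\ref{th:E}(i) applies at $t=-k/n$, and then use $\gcd(k,n)=1$ to derive a contradiction. Your endgame is a slightly cleaner variant — you note that a lattice segment of slope $-k/n$ must have horizontal length divisible by $n$, so one factor would carry the entire edge and its Newton polygon would dip below the $x$-axis, whereas the paper instead bounds $g^*$ and $h^*$ by the first unit-coefficient indices $r$ and $s$ with $r+s=n$ and contradicts $kr=\ell n$ — but both hinge on the same coprimality in the same way.
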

\begin{proof}[\bf Proof of Theorem \ref{th:6}] Let $\K$ be the field of fraction of $R$. Then $v$ extends to $\K$ with the valuation ring $R$, that is, $R=\{x\in \K~|~v(x)\geq 0\}\cup\{0\}$. We recall that $v(\pi)=1$ and every element of $\K$ and hence of $R$ is an associate to a power of $\pi$. Consequently, $\pi$ generates a prime ideal in $R$, and so, $\pi$ is a prime element in $R$.

Now suppose on the contrary that $f(z)=g(z)h(z)$ for nonunits $g,h\in R[[z]]$. If we let $g=\sum_{i=0}^\infty b_i z^i$ and $h=\sum_{i=0}^\infty c_i z^i$, then we must have $v(b_0)>0$ and $v(c_0)>0$. By the hypothesis, we have
 \begin{eqnarray*}
 \frac{v(a_n)-v(a_0)}{n-0} &=& -\frac{k}{n}> -\frac{v(a_i)}{n-i}= \frac{v(a_n)-v(a_i)}{n-i},~i=1,\ldots,n-1,
 \end{eqnarray*}
 which shows that $NP(f)$ consists of exactly one edge  of negative slope $-k/n$ joining the vertices $(0,k)$ and $(n,0)$, so that $f^*(-k/n)=n$. Since $\pi$ does not divide $a_{n}$, there exist smallest indices $r$ and $s$ for which $v(b_r)=0=v(c_s)$ and $r+s=n$. Let $\alpha$ and $\beta$ be the least indices for which
 \begin{eqnarray*}
 \frac{v(b_\alpha)}{r-\alpha}&=&\min_{0\leq t\leq r-1}\Bigl\{\frac{v(b_t)}{r-t}\Bigr\},~
 \frac{v(c_\beta)}{s-\beta}=\min_{0\leq t\leq s-1}\Bigl\{\frac{v(c_t)}{s-t}\Bigr\},
 \end{eqnarray*}
so that the segment joining the lattice points $(\alpha,v(b_\alpha))$ and $(r,0)$ is the rightmost edge of $NP(g)$ of negative slope, and so, we have $S=S(g)\geq 0$ and $0\leq g^*(t)\leq r$ for all $t$ with $-\infty\leq t<0$. Similarly, the segment joining the lattice points $(\beta,v(c_\beta))$ and $(s,0)$ is the rightmost edge of $NP(h)$ of negative slope  so that $S(h)\geq 0$ and $0\leq h^*(t)\leq s$ for all $t$ with $-\infty \leq t<0$.  Assume that $S(g)\leq S(h)$. Then we have
 \begin{eqnarray*}
 0\leq g^*(t)+h^*(t)\leq r+s,~\text{for}~-\infty\leq t<0.
 \end{eqnarray*}
 By Theorem \ref{th:E} (i), we have
 \begin{eqnarray*}
 f^*(t)=(gh)^*(t)=g^*(t)+h^*(t),~\text{for}~-\infty \leq t<S.
 \end{eqnarray*}
In particular, for $\eta=-k/n$, we have
\begin{eqnarray*}
n=f^*(\eta)=(gh)^*(\eta)=g^*(\eta)+h^*(\eta),
 \end{eqnarray*}
which shows that $g^*(\eta)+h^*(\eta)=n=r+s$. This in view of the fact that $g^*(\eta)\leq r$ and $h^*(\eta)\leq s$ enforces $g^*(\eta)=r$ and $h^*(\eta)=s$. This is possible if and only if
$\alpha=0=\beta$, since $r+s=n$. Consequently, $v(b_\alpha)=v(b_0)>0$ and $v(c_\beta)=v(c_0)>0$, which shows that $v(b_0)<k$ and $v(c_0)<k$. Let $v(b_0)=\ell$ so that $v(c_0)=k-\ell$. We then arrive at the following:
 \begin{eqnarray*}
\eta= -\frac{k}{n} &=& -\frac{\ell}{r}=-\frac{k-\ell}{n-r},
 \end{eqnarray*}
 which is impossible, since $\ell<k$ and $\gcd(k,n)=1$.
\end{proof}
Taking $R=\mathbb{Z}$ and $v=v_p$, the $p$-adic valuation of $\mathbb{Z}$ in Theorem \ref{th:6}, we now proceed to prove Theorems \ref{th:3} \& \ref{th:4} as follows:

To prove Theorem \ref{th:3}, we observe by the hypothesis that $NP(f)$ has exactly one edge of negative slope $-k/(km+1)$ joining the vertices $(0,k)$ and $(km+1,0)$ where we note that $\gcd(k,km+1)=1$. By Theorem \ref{th:6}, $f$ is irreducible in $\mathbb{Z}[[z]]$.

To prove Theorem \ref{th:4}, we observe by the hypothesis that $NP(f)$ has exactly one edge of negative slope $-k/j$ joining the vertices $(0,k)$ and $(j,0)$ where it is given that $\gcd(k,j)=1$. By Theorem \ref{th:6}, $f$ is irreducible.
\begin{remark}
 The hypothesis (ii) in Theorem \ref{th:6} implies that
\begin{eqnarray*}
v(a_i)\geq \Big\lceil \frac{k}{n}(n-i) \Big\rceil.
\end{eqnarray*}
 If we take $n=km+1$ for a positive integer $m$, then in order that the aforementioned condition is satisfied by $f=\sum_{i=0}^\infty a_i z^i\in R[[z]]$ for which $a_0=u\pi^k$, one must have
\begin{eqnarray*}
v(a_{(k-\ell)m+i})\geq \Big\lceil \frac{k}{km+1}(km+1-((k-\ell)m+i)) \Big\rceil=\Big\lceil \ell-\frac{i}{m}+\frac{k-\ell+\frac{i}{m}}{km+1} \Big \rceil=\ell,
\end{eqnarray*}
for all $\ell$ and $i$ satisfying $1\leq \ell\leq k$ and $1\leq i\leq m$. Thus the condition
\begin{eqnarray*}
v(a_{(k-\ell)m+i})\geq \ell
\end{eqnarray*}
in Theorem \ref{th:4}  is the best possible lower bound on respective coefficients in accordance with Theorem \ref{th:6} to deduce irreducibility of $f$. This observation may be used to explain the calculations made in \cite{Dutta2024} for obtaining the irreducibility criteria for  $f$ over integers for the cases $k=3,4,5$.
\end{remark}
\begin{lemma}\label{L:1}
 Let $R$ be a principal ideal domain and $\K$, the field of fraction of $R$. Let  $f=\sum_{i=0}^\infty a_i z^i \in R[[z]]$ be such that $a_0$ is an associate to a prime power $p^k$ in $R$, where $k$ is a positive integer. Let $\mathfrak{p}$ be the prime ideal generated by $p$ in $R$, and let $R_{\mathfrak{p}}$ be the localization of $R$ by $\mathfrak{p}$. If $f$ is irreducible in $R_{\mathfrak{p}}[[z]]$, then $f$ is irreducible in $R[[z]]$.
\end{lemma}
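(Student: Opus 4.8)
The plan is to argue by contraposition. Assuming that $f$ factors nontrivially in $R[[z]]$, I would show that the very same factorization is already nontrivial in the larger ring $R_{\mathfrak{p}}[[z]]$, contradicting the hypothesis that $f$ is irreducible there. The setup I would record first is the inclusion $R\hookrightarrow R_{\mathfrak{p}}$, which is injective since $R$ is a domain, and which extends coefficientwise to an injection $R[[z]]\hookrightarrow R_{\mathfrak{p}}[[z]]$. I would also note that $R_{\mathfrak{p}}$ is a discrete valuation domain whose maximal ideal is generated by $p$; in particular $p$ remains a nonunit of $R_{\mathfrak{p}}$, and up to associates it is the only nonunit prime there. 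Throughout I would use the unit criterion recalled in the introduction: a power series over a ring is invertible precisely when its constant term is a unit in that ring.

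So suppose $f=gh$ with $g=\sum_{i=0}^\infty b_i z^i$ and $h=\sum_{i=0}^\infty c_i z^i$ nonunits of $R[[z]]$. Since $a_0\neq 0$ and $R[[z]]$ is a domain, $g$ and $h$ are nonzero, and the unit criterion forces $b_0=g(0)$ and $c_0=h(0)$ to be nonunits of $R$. Comparing constant terms gives $b_0 c_0=a_0$, an associate of $p^k$. Because $R$ is a PID and hence a UFD, and the only prime dividing $a_0$ up to associates is $p$, unique factorization forces $b_0$ and $c_0$ to be associates of $p^{k_1}$ and $p^{k_2}$ respectively, with $k_1+k_2=k$; moreover $k_1,k_2\geq 1$ since $b_0$ and $c_0$ are nonunits of $R$.

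The key step is to transport this conclusion to the localization. As $p$ lies in $\mathfrak{p}R_{\mathfrak{p}}$ it is a nonunit of $R_{\mathfrak{p}}$, so $b_0$, being an associate of $p^{k_1}$ with $k_1\geq 1$, is a nonunit of $R_{\mathfrak{p}}$, and likewise for $c_0$. Applying the unit criterion again, now over $R_{\mathfrak{p}}$, both $g$ and $h$ are nonunits of $R_{\mathfrak{p}}[[z]]$. Then $f=gh$ exhibits $f$ as a product of two nonunits of $R_{\mathfrak{p}}[[z]]$, so $f$ is reducible in $R_{\mathfrak{p}}[[z]]$, contradicting the hypothesis. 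Hence $f$ is irreducible in $R[[z]]$.

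I expect the only genuine subtlety to be the passage in the third paragraph: in general a nonunit of $R[[z]]$ need \emph{not} remain a nonunit of $R_{\mathfrak{p}}[[z]]$, since a constant term that is a nonunit of $R$ can become a unit of $R_{\mathfrak{p}}$ once primes other than $p$ are inverted. The hypothesis that $a_0$ is an associate of the pure prime power $p^k$ is exactly what rules this out, as it compels each factor's constant term to be a power of $p$ alone. Everything else reduces to a routine comparison of constant terms combined with the unit criterion for power series rings.
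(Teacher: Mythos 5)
Your proof is correct and follows essentially the same route as the paper's: both argue the contrapositive, compare constant terms to force $g(0)$ and $h(0)$ to be associates of $p^{k_1}$ and $p^{k_2}$ with $k_1,k_2\geq 1$, and conclude that $g$ and $h$ remain nonunits in $R_{\mathfrak{p}}[[z]]$. Your closing remark correctly identifies the one point where the hypothesis on $a_0$ is actually used, which the paper leaves implicit.
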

\begin{proof} Since $a_0$ is associate to $p^k$, it follows that $f$ is a nonzero nonunit in $R_{\mathfrak{p}}[[z]]$. Let $v$ be the discrete valuation on $R_{\mathfrak{p}}$ with the uniformizing parameter $p$ for $R_{\mathfrak{p}}$, which extends to $\K$ with the valuation ring $R_{\mathfrak{p}}$. We will prove the contrapositive statement to the conclusion of the Lemma \ref{L:1}. So, let $f$ factors as $f(z)=g(z)h(z)$ for nonzero nonunits $g,h\in R[[z]]$. Then $up^k=f(0)=g(0)h(0)$, which in view of the fact that each of $g$ and $h$ is a nonunit in $R[[z]]$ tells us that $g(0)=u_1p^{k_1}$ and $h(0)=u_2p^{k_2}$, where $u_1$ and $u_2$ are units in $R$ with $u_1u_2=u$ and $k_1$ and $k_2$ are positive integers with $k_1+k_2=k$. This proves that each of $g$ and $h$ is a nonzero nonunit in $R_{\mathfrak{p}}[[z]]$, and so, $f$ factors in $R_{\mathfrak{p}}[[z]]$.
\end{proof}
As an application of Theorem \ref{th:6} and Lemma \ref{L:1}, we have the following factorization result.
\begin{theorem}\label{th:8}
Let $R$ be a principal ideal domain. Let $f=\sum_{i=0}^\infty a_iz^i\in R[[z]]$ be such that $a_0=up_1^{k_1}\cdots p_r^{k_r}$, $r\geq 2$, where $u$ is a unit in $R$, $p_1$, $\ldots$, $p_r$ are $r$ pairwise nonassociate primes in $R$ and $k_1,\ldots,k_r$ are all positive integers. Let $R_{\mathfrak{p}_i}$ be the localization of $R$ by the prime ideal $\mathfrak{p}_i$ generated by the prime $p_i$ with discrete valuation $v_i$ with $p_i$ as a uniformizing parameter for $R_{\mathfrak{p}_i}$ for each $i=1,\ldots r$. Each $v_i$ extends to the field of fraction $\K$ of $R$ with valuation ring $R_{\mathfrak{p}_i}$. Suppose that for each $i=1,\ldots, r$, there exists  a positive integer $n_i$ for which the following conditions are satisfied.
\begin{enumerate}
\item[(i)] $v_{i}(a_{n_i})=0$ and $\gcd(k_i,n_i)=1$,
\item[(ii)] $\frac{k_i}{n_i}<\frac{v_{i}(a_t)}{n_i-t}$ for each $t=1,\ldots,n_i-1$.
\end{enumerate}
Then $f$ is a product of exactly $r$ irreducible factors in $R[[z]]$.
\end{theorem}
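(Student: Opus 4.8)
The plan is to peel off one prime power at a time to build the factorization, and then to certify irreducibility of each piece by passing to the localizations $R_{\mathfrak{p}_i}$, where Theorem \ref{th:6} applies directly. First I would produce a factorization $f = g_1 \cdots g_r$ in $R[[z]]$ with $g_i(0) = u_i p_i^{k_i}$ for units $u_i \in R$ satisfying $u_1 \cdots u_r = u$. This is exactly the construction used in the proofs of Theorems \ref{th:A} and \ref{th:1b}: writing $a_0 = (u_1 p_1^{k_1})\,(u_2 \cdots u_r\, p_2^{k_2} \cdots p_r^{k_r})$ as a product of two nonassociate elements and invoking Theorem \ref{th:A} splits off a factor with constant term $u_1 p_1^{k_1}$; iterating $r-1$ times yields the claimed factorization.

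Next, I would fix an index $i$ and regard $f$ as an element of $R_{\mathfrak{p}_i}[[z]]$. Because $p_j \notin \mathfrak{p}_i$ for $j \neq i$, each such $p_j$ is a unit in $R_{\mathfrak{p}_i}$, so $a_0 = u' p_i^{k_i}$ with $u' = u \prod_{j \neq i} p_j^{k_j}$ a unit, and $v_i(a_0) = k_i$. The conditions (i) and (ii) for the index $i$ are then verbatim the hypotheses of Theorem \ref{th:6} applied to $f$ over the discrete valuation domain $(R_{\mathfrak{p}_i}, v_i)$ with uniformizing parameter $p_i$, $k = k_i$ and $n = n_i$; hence $f$ is irreducible in $R_{\mathfrak{p}_i}[[z]]$. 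On the other hand, for $j \neq i$ the constant term $g_j(0) = u_j p_j^{k_j}$ is a unit in $R_{\mathfrak{p}_i}$, so each such $g_j$ is a unit in $R_{\mathfrak{p}_i}[[z]]$; thus $f = g_i \prod_{j \neq i} g_j$ exhibits $g_i$ as an associate of $f$ in $R_{\mathfrak{p}_i}[[z]]$, whence $g_i$ is irreducible in $R_{\mathfrak{p}_i}[[z]]$ as well.

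Finally, since $g_i(0) = u_i p_i^{k_i}$ is an associate of the prime power $p_i^{k_i}$ in $R$, Lemma \ref{L:1} (taken with $p = p_i$ and $\mathfrak{p} = \mathfrak{p}_i$) promotes irreducibility in $R_{\mathfrak{p}_i}[[z]]$ to irreducibility in $R[[z]]$, so each $g_i$ is irreducible in $R[[z]]$. Consequently $f = g_1 \cdots g_r$ is a product of exactly $r$ irreducible factors in $R[[z]]$, which is consistent with the lower bound $\Omega_f \geq \omega(a_0) = r$ furnished by Theorem \ref{th:1}. I expect the only delicate point to be the bookkeeping at the localization step: confirming that the extraneous factors $g_j$ with $j \neq i$ genuinely become units in $R_{\mathfrak{p}_i}[[z]]$ and that $f$ and $g_i$ are therefore associates there, so that irreducibility of $f$ transfers to $g_i$. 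All of the genuinely hard Newton-polygon analysis — in particular the failure of the naive Dumas statement for power series illustrated earlier — is already absorbed into Theorem \ref{th:6}, so no further polygon manipulation is required.
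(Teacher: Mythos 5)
Your proposal is correct, and it reaches the conclusion by a genuinely different (and arguably cleaner) route than the paper. Both arguments begin identically, by iterating the construction of Theorem \ref{th:A} to produce $f=g_1\cdots g_r$ with $g_i(0)=u_ip_i^{k_i}$, and both end by invoking Lemma \ref{L:1} to descend from $R_{\mathfrak{p}_i}[[z]]$ to $R[[z]]$. The difference is in the middle step. The paper applies Theorem \ref{th:6} to the \emph{factor} $g_i$ in $R_{\mathfrak{p}_i}[[z]]$, which forces it to first establish that $NP(g_i,p_i)$ has the required single edge of slope $-k_i/n_i$; this is done by Newton-polygon bookkeeping, namely the observation that $g_j^{*_i}(s)=0$ for $s<0$ when $j\neq i$ together with the additivity $f^{*_i}=\sum_j g_j^{*_i}$ on negative slopes furnished by Hoffmann's Theorem \ref{th:E}. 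You instead apply Theorem \ref{th:6} directly to $f$ itself viewed in $R_{\mathfrak{p}_i}[[z]]$ --- where hypotheses (i) and (ii) are literally the hypotheses of that theorem with $\pi=p_i$, $k=k_i$, $n=n_i$ --- and then transfer irreducibility from $f$ to $g_i$ by the elementary remark that each $g_j$ with $j\neq i$ has unit constant term in $R_{\mathfrak{p}_i}$ and is therefore a unit of $R_{\mathfrak{p}_i}[[z]]$, making $g_i$ an associate of $f$ there. This sidesteps any further appeal to Theorem \ref{th:E} (all Newton-polygon content stays encapsulated in Theorem \ref{th:6}) and replaces the polygon-additivity argument with a unit/associate observation; the trade-off is essentially zero, since your version proves the same statement with no loss of generality. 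The only points needing care --- that $g_i$ remains a nonunit in $R_{\mathfrak{p}_i}[[z]]$ (true, as $v_i(g_i(0))=k_i>0$) and that associates of irreducibles are irreducible in the domain $R_{\mathfrak{p}_i}[[z]]$ --- are both routine, so I see no gap.
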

\begin{proof}[\bf Proof of Theorem \ref{th:8}] By following the proof of Theorem \ref{th:A}, we find that $f$ factors as $f(z)=f_1(z)\cdots f_r(z)$, where $f_i=\sum_{j=0}^\infty b_{ij}z^j \in R[[z]]$ with $b_{i0}=u_ip_i^{k_i}$ for some unit $u_i$ in $R$ for each $i=1,\ldots, r$ so that $u_1\cdots u_r=u$. In view of this,  it will be sufficient to prove that each $f_i$ is irreducible in $R[[z]]$.

By the hypothesis, we observe that $(R_{\mathfrak{p}_i}, v_i)$ is a discrete valuation domain with uniformizing parameter $p_i$ for $R_{\mathfrak{p}_i}$ and $f_i$ is a nonzero nonunit in $R_{\mathfrak{p}_i}[[z]]$ for each $i$. We will prove that $f_i$ is irreducible in $R_{\mathfrak{p}_i}[[z]]$ for each $i=1,\ldots,r$. We proceed as follows.

For  each $i\in \{1,\ldots,r\}$, let $f^{*_i}$ denotes $f^*$  for $NP(f,p_i)$ with a similar meaning for $f_i^{*_j}$ to be $f_i^*$  for $NP(f_i,p_j)$. We observe that for all $i,j\in \{1,\ldots,r\}$, if $i\neq j$, then $f_i^{*_j}(s)=0$ for all $s<0$. So, in particular, taking $s=-k_i/n_i$ for $i\in \{1,\ldots,r\}$, we have by Theorem \ref{th:E} that
\begin{eqnarray*}
n_i=f^{*_i}(-k_i/n_i) &=& f_1^{*_i}(-k_i/n_i)+\cdots+f_r^{*_i}(-k_i/n_i)= f_i^{*_i}(-k_i/n_i),
\end{eqnarray*}
which proves that $NP(f_i,p_i)$ has a segment of slope $-k_i/n_i$ joining the vertices $(0,k_i)$ and $(n_i,0)$, that is, $v(b_{in_i})=0$ and $\frac{k_i}{n_i}<\frac{v_{p_i}(b_{it})}{n_i-t}$  for each $t=1,\ldots,n_i-1$. Since $\gcd(k_i,n_i)=1$, by Theorem \ref{th:6}, $f_i$ is irreducible in $R_{\mathfrak{p}_i}[[z]]$ for each $i=1,\ldots,r$. By Lemma \ref{L:1}, $f_i$ is irreducible in $R[[z]]$ for every $i=1,\ldots,r$.
\end{proof}
The following two factorization results are immediate from Theorem \ref{th:8}.
\begin{theorem}\label{th:7}
Let $R$ be a principal ideal domain. Let $f=\sum_{i=0}^\infty a_i z^i \in R[[z]]$ be such that $a_0=up_1^{k_1}\cdots p_r^{k_r}$, $r\geq 2$, where $u$ is a unit in $R$; $p_1$, $\ldots$, $p_r$ are $r$ pairwise nonassociate primes in $R$ and $k_1,\ldots,k_r$ are all positive integers. Suppose that for each $\ell=1,\ldots,r$, there exists a  positive integer $n_\ell$ for which the following hold.
\begin{eqnarray*}
p_\ell^{k_\ell}\mid a_i,~i=1,\ldots,n_\ell-1;~p_\ell\nmid a_{n_\ell};~\gcd(k_\ell,n_\ell)=1.
\end{eqnarray*}
Then $f$ is a product of exactly $r$ irreducible factors in $R[[z]]$.
\end{theorem}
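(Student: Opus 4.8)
The plan is to deduce Theorem \ref{th:7} directly from Theorem \ref{th:8}, since the two results share the same conclusion and the same hypothesis on the constant term $a_0=up_1^{k_1}\cdots p_r^{k_r}$. All that is required is to verify that the divisibility hypotheses imposed here translate into the valuation-theoretic conditions (i) and (ii) of Theorem \ref{th:8}, where $v_\ell$ denotes the discrete valuation on the localization $R_{\mathfrak{p}_\ell}$ normalized by $v_\ell(p_\ell)=1$. The key preliminary observation is that, because $p_1,\ldots,p_r$ are pairwise nonassociate primes in the PID $R$, the valuation $v_\ell$ restricts on $R$ to the exact multiplicity of $p_\ell$ in the factorization of an element; hence divisibility by powers of $p_\ell$ is encoded precisely by lower bounds on $v_\ell$, unaffected by the other prime factors.

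First I would fix an index $\ell\in\{1,\ldots,r\}$ and take the positive integer $n_\ell$ supplied by the hypothesis, noting that it plays the role of the index $n_i$ in Theorem \ref{th:8}. For condition (i), the requirement $p_\ell\nmid a_{n_\ell}$ is exactly the statement $v_\ell(a_{n_\ell})=0$, and the coprimality $\gcd(k_\ell,n_\ell)=1$ is assumed verbatim; thus (i) holds with no additional argument.

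The only step needing a short computation is condition (ii). Here I would use that $p_\ell^{k_\ell}\mid a_t$ for every $t=1,\ldots,n_\ell-1$, which translates into $v_\ell(a_t)\geq k_\ell$ for these indices. Since $t\geq 1$ forces $0<n_\ell-t<n_\ell$, I would chain the inequalities
\begin{eqnarray*}
\frac{v_\ell(a_t)}{n_\ell-t}\geq \frac{k_\ell}{n_\ell-t}>\frac{k_\ell}{n_\ell},
\end{eqnarray*}
which is precisely condition (ii). With (i) and (ii) verified for every $\ell=1,\ldots,r$, Theorem \ref{th:8} applies and yields that $f$ is a product of exactly $r$ irreducible factors in $R[[z]]$.

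I do not anticipate any genuine obstacle: the entire substance of Theorem \ref{th:7} resides in Theorem \ref{th:8}, and the passage between them is the elementary observation that the sharp divisibility condition $p_\ell^{k_\ell}\mid a_t$ is a cruder but fully sufficient substitute for the slope inequality in (ii). The one point deserving care is the normalization of the $v_\ell$ and the verification that $v_\ell$ measures the $p_\ell$-multiplicity on $R$, which is exactly what makes the hypothesis $p_\ell^{k_\ell}\mid a_t$ equivalent to $v_\ell(a_t)\geq k_\ell$.
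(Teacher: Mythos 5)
Your proposal is correct and follows exactly the route the paper intends: the paper states that Theorem \ref{th:7} is immediate from Theorem \ref{th:8}, and your verification that $p_\ell\nmid a_{n_\ell}$ gives $v_\ell(a_{n_\ell})=0$ and that $p_\ell^{k_\ell}\mid a_t$ gives $v_\ell(a_t)\geq k_\ell$, hence $\frac{v_\ell(a_t)}{n_\ell-t}\geq\frac{k_\ell}{n_\ell-t}>\frac{k_\ell}{n_\ell}$ for $1\leq t\leq n_\ell-1$, is precisely the omitted computation. Nothing is missing.
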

\begin{theorem}\label{th:9}
    Let $R$ be a principal ideal domain. Let $f=\sum_{i=0}^\infty a_i z^i \in R[[z]]$ be such that $a_0=up_1^{k_1}\cdots p_r^{k_r}$, $r\geq 2$, where $u$ is a unit in $R$; $p_1$, $\ldots$, $p_r$ are $r$ pairwise nonassociate primes in $R$ and $k_1,\ldots,k_r$ are all positive integers. Suppose that for each $j=1,\ldots,r$, there exists a least positive integer  $m_j$ for which the following hold.
\begin{eqnarray*}
p_j^{\ell}\mid a_{(k_j-\ell)m_j+i},~p_j^{\ell+1}\nmid a_{(k_j-\ell)m_j+i},~p_j\nmid a_{k_jm_j+1},
\end{eqnarray*}
for all $i=1,\ldots, m_j$ and $\ell=1,\ldots, k_j$. Then   $f$ is a product of exactly $r$ irreducible factors in $R[[z]]$.
\end{theorem}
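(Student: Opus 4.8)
The plan is to deduce Theorem \ref{th:9} directly from Theorem \ref{th:8} by exhibiting, for each $j\in\{1,\ldots,r\}$, a suitable index $n_j$ and verifying that the two hypotheses of Theorem \ref{th:8} hold for the valuation $v_j=v_{p_j}$. The natural candidate is $n_j=k_jm_j+1$, precisely the index at which the hypothesis asserts $p_j\nmid a_{k_jm_j+1}$. Hypothesis (i) of Theorem \ref{th:8} is then immediate: since $p_j\nmid a_{k_jm_j+1}$ we get $v_j(a_{n_j})=0$, and $\gcd(k_j,n_j)=\gcd(k_j,k_jm_j+1)=\gcd(k_j,1)=1$.

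The substantive step is hypothesis (ii), namely $\frac{k_j}{n_j}<\frac{v_j(a_t)}{n_j-t}$ for all $t=1,\ldots,n_j-1=k_jm_j$. First I would record the combinatorial fact that, as $\ell$ runs from $k_j$ down to $1$ and $i$ runs over $1,\ldots,m_j$, the indices $(k_j-\ell)m_j+i$ sweep out $\{1,\ldots,k_jm_j\}$ as $k_j$ consecutive blocks of length $m_j$, the $\ell$-th block being $\{(k_j-\ell)m_j+1,\ldots,(k_j-\ell+1)m_j\}$; on this block the assumption $p_j^{\ell}\mid a_t$, $p_j^{\ell+1}\nmid a_t$ forces $v_j(a_t)=\ell$ exactly. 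For $t$ in the $\ell$-th block the desired inequality reads $\frac{k_j}{k_jm_j+1}<\frac{\ell}{k_jm_j+1-t}$, which upon clearing the positive denominators is equivalent to $(k_j-\ell)(k_jm_j+1)<k_jt$. Using the lower bound $t\geq(k_j-\ell)m_j+1$ valid throughout the block, the right side is at least $k_j(k_j-\ell)m_j+k_j$, while the left side equals $k_j(k_j-\ell)m_j+(k_j-\ell)$, so the inequality collapses to $k_j-\ell<k_j$, i.e. $\ell>0$, which holds.

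The only point demanding care is that the inequality in (ii) is tightest at the left endpoint $t=(k_j-\ell)m_j+1$ of each block: in the form $(k_j-\ell)(k_jm_j+1)<k_jt$ the left side is constant while the right side increases with $t$, so verifying it at the left endpoint suffices for the whole block. With (i) and (ii) confirmed for every $j$, Theorem \ref{th:8} applies verbatim and yields that $f$ is a product of exactly $r$ irreducible factors in $R[[z]]$. I expect no genuine obstacle beyond the index bookkeeping; the content is entirely the translation of the blockwise valuation pattern into the single slope condition of Theorem \ref{th:8}, mirroring the computation already recorded in the Remark following Theorem \ref{th:6}.
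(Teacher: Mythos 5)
Your proposal is correct and follows exactly the route the paper intends: the paper states Theorem \ref{th:9} as an immediate consequence of Theorem \ref{th:8}, and the blockwise computation you carry out (taking $n_j=k_jm_j+1$ and reducing hypothesis (ii) to $(k_j-\ell)(k_jm_j+1)<k_jt$ at the left endpoint of each block) is the same arithmetic recorded in the Remark following Theorem \ref{th:6}. No gaps; your write-up is in fact more explicit than the paper's.
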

\section{Examples}\label{sec:4}
In this section, we will exhibit some explicit examples of formal power series whose factorization properties may be deduced from the results proved in the preceding Section.
\begin{example}Let $p$ be a prime number. By Theorem \ref{th:1}, the linear polynomial $p+z$ is irreducible in $\mathbb{Z}[[z]]$.  Therefore, for any positive integer $n\geq 2$, the formal power series
$f_1=(p+z)^n$ is a product of exactly $n$ irreducible factors in $\mathbb{Z}[[z]]$. Since $f_1(0)=p^n$, by Theorem \ref{th:1}  we have $1\leq \Omega_{f_5}\leq n$. So, the polynomial $f_1$ attains upper bound mentioned in Theorem \ref{th:1}.
\end{example}
\begin{example}Let $g\in \mathbb{Z}[[z]]$. For a prime number $p$ and positive integers $k$ and $j$ with $j<k$, the  power series
 \begin{eqnarray*}
 f_2   &=& p^k\pm z^j+z^{j+1}g(z) \in \mathbb{Z}[[z]]
 \end{eqnarray*}
satisfies the hypothesis of Theorem \ref{th:2}. Therefore, $f_2$ is a product of at most $\min\{k,j\}=j$ irreducible factors in $\mathbb{Z}[[z]]$. If $j=1$, then $f_2$ is irreducible in $\mathbb{Z}[[z]]$.
\end{example}
\begin{example} For a prime $p$, the localization $\mathbb{Z}_{(p)}$ of $\mathbb{Z}$ by the prime ideal $(p)$ in the ring $\mathbb{Z}$ is a discrete valuation domain with the discrete valuation $v$ with $v(p)=1$. Now in view of Theorem \ref{th:2G}, for any $g\in \mathbb{Z}_{(p)}[[z]]$ and a positive integer $k$, the power series
    \begin{eqnarray*}
f_3 &=& \frac{p^k}{p+1}+pz+pz^2+\cdots +pz^{k-1}+(p+1)z^k +z^{k+1}g(z)\in \mathbb{Z}_{(p)}[[z]]
\end{eqnarray*}
is a  product of at most $\min \{k,2,3,4,\ldots,k\}=2$ irreducible factors in $\mathbb{Z}_{(p)}[[z]]$.
\end{example}
\begin{example}For a  positive integer $k\geq 2$, the power series
\begin{eqnarray*}
f_4 &=& \pm p^{k}+(p^{k}z+p^{k-1}z^2+\cdots+pz^{k})+z^{k+1}(1+z+z^2+\cdots)\in \mathbb{Z}[[z]],
\end{eqnarray*}
satisfies the hypothesis of Theorem \ref{th:3} for $m=1$, and so, $f_4$ is irreducible in $\mathbb{Z}[[z]]$.
\end{example}
\begin{example}
For positive integers $k$ and $j$ with $\gcd(k,j)=1$, the power series
\begin{eqnarray*}
f_5 &=&  (\pm p^{k}+z^j)(1+z+\cdots+z^{j-1}+\cdots)\in  \mathbb{Z}[[z]].
\end{eqnarray*}
can be expressed as $f_5=\pm p^{k}(1+z+\cdots+z^{j-1}+\cdots)+z^j(1+z+\cdots+z^{j-1}+\cdots)$. This
shows that $f_5$ satisfies the hypothesis of Theorem \ref{th:4}. So, $f_5$ is irreducible in   $\mathbb{Z}[[z]]$. Since $(1+z+\cdots+z^{j-1}+\cdots)$ is a unit in $\mathbb{Z}[[z]]$, the polynomial $\pm p^k+z^j\in \mathbb{Z}[[z]]$ being an associate of $f_5$ must also be irreducible in $\mathbb{Z}[[z]]$.
\end{example}
\begin{example} For any field $\K$, consider the polynomial ring $\K[y]$ with the degree valuation $v_\infty$, that is, $v_\infty(0)=\infty$, and for any nonzero polynomial $a\in \K[y]$, $v_\infty(a)=-\deg a$. Then $(\K[y],v_\infty)$ is a discrete valuation domain with any linear polynomial in $\K[y]$ as a uniformizing parameter for $\K[y]$.  The valuation $v_\infty$  extends to the field of fraction $\K(y)$ of $\K[y]$ by defining $v_\infty(a/b)=\deg b-\deg a$ for all $a,b\in \K[y]$ with $b\neq 0$. Now consider the power series $f_6=\sum_{i=0}^\infty a_i(y)z^i\in (\K[y])[[z]]$ for which $a_0=(1+y)^ku$ for nonzero $u\in \K$ and positive integer $k$. Suppose there exists a natural  number $n$ such that
\begin{enumerate}
\item[(i)] $a_n\in \K$ and $\gcd(k,n)=1$,
\item[(ii)] $\frac{k}{n}>\frac{\deg(a_i)}{n-i}$ for each $i=1,\ldots,n-1$.
\end{enumerate}
Then by Theorem \ref{th:6}, the power series $f_6$ is irreducible in $(\K[y])[[z]]$. In particular, if we take $\K=\mathbb{Q}$, we see that the bivariate  polynomial
\begin{eqnarray*}
(1+y)^8+(1+y)^4z+(1+y)^2z^2+y z^3\in  (\mathbb{Q}[y])[[z]]
\end{eqnarray*}
is irreducible.

Moreover, one can obtain factorization results for the formal power series over the polynomial ring $\K[y]$ parallel to Theorems \ref{th:6}-\ref{th:9} by taking $R=\K[y]$ and $v=v_\infty$.
\end{example}
\begin{example} We recall that the ring of Gaussian integers $\Bbb{Z}[\iota]$ is a PID, and so, the power series ring  $(\Bbb{Z}[\iota])[[z]]$ is a UFD. The set of units of $\Bbb{Z}[\iota]$ is $\{\pm1,\pm \iota\}$. An element $a+\iota b \in \Bbb{Z}[\iota]$ is prime if and only if either $ab=0$ and one of $a$ and $b$ is a prime number of the form $4m+3$, or $ab\neq 0$ and $a^2+b^2$ is a prime number  satisfying $a^2+b^2\not\equiv 0,3 \mod 4$. Now for any $g\in  \Bbb{Z}[\iota][[z]]$ and any positive integer $k$, we consider the power series
\begin{eqnarray*}
f_{7}=19^k(4+3\iota)u+4z^{k-1}+z^k g(z)\in (\Bbb{Z}[\iota])[[z]],
\end{eqnarray*}
where $u$ is a unit in $\Bbb{Z}[\iota]$. Note that $19$ and $4+3\iota$  are nonassociate gaussian prime.
The power series $f_7$ satisfies the hypothesis of Theorem \ref{th:7} with $a_0=19^k(4+3\iota)u$, $r=2$, $k_1=k$, $k_2=1$, $n_1=k-1=n_2$  so that $\gcd(k_1,n_1)=1=\gcd(k_2,n_2)$. So, $f_{7}$ is a product of exactly $2$ irreducible factors in $(\Bbb{Z}[\iota])[[z]]$.
\end{example}
\textbf{\large Acknowledgments.}
Senior Research Fellowship (SRF) to Ms. Rishu Garg wide grant no. CSIRAWARD/JRF-NET2022/11769
 from Council of Scientific and Industrial Research (CSIR) is gratefully acknowledged. The present paper is a part of her Ph.D. thesis work.

\subsection*{Disclosure statement}
The authors report that there are no competing interests to declare.

\end{document}